\documentclass[11pt]{amsart}
\usepackage[left=1in, right=1in]{geometry} 
\usepackage{amsmath}
\usepackage[utf8]{inputenc}
\usepackage[all,cmtip]{xy}
\usepackage{amsmath,stmaryrd}
\usepackage{hyperref}
\usepackage{amssymb}
\usepackage{amscd,amsfonts,amsmath,amssymb, bbm,dsfont, comment}
\usepackage{amsmath}
\usepackage{amsfonts}
\usepackage{amssymb}\usepackage{enumerate,epsf,fancyhdr,float,graphicx,tabularx}
\usepackage{latexsym,mathrsfs,multirow}
\usepackage{wasysym}
\usepackage{xypic}
\usepackage[all]{xy}\usepackage[OT2,T1]{fontenc}
\usepackage[modulo,mathlines,displaymath, running]{lineno}
\usepackage{amsmath,mathrsfs,enumerate,wasysym}
\usepackage{xypic,hhline}
\usepackage[all]{xy}
\usepackage[OT2,T1]{fontenc}
\usepackage[modulo,mathlines,displaymath]{lineno}
\usepackage{tikz,tikz-cd}
\usepackage{dashrule}
\usetikzlibrary{matrix}
\usepackage[modulo,mathlines,displaymath]{lineno}

\newtheorem{theorem}{Theorem}[section]

\DeclareSymbolFont{cyrletters}{OT2}{wncyr}{m}{n}\DeclareMathSymbol{\Sha}{\mathalpha}{cyrletters}{"58}
\DeclareMathSymbol{\FSha}{\mathalpha}{cyrletters}{"11}

\renewcommand{\phi}{{\varphi}}

\renewcommand{\geq}{\geqslant}

\newcommand{\links}{\left(\begin{array}{cc}}
\newcommand{\rechts}{\end{array}\right)}
\newcommand{\bai}{\left[\begin{array}{cc}}
\newcommand{\dai}{\end{array}\right]}
\newcommand{\hidari}{\left(\begin{array}{c}}
\newcommand{\migi}{\end{array}\right)}

\newcommand{\C}{\mathbb{C}}

\newcommand{\Q}{\mathbb{Q}}

\newcommand{\Z}{\mathbb{Z}}

\newcommand{\gp}{{\mathfrak p}}
\newcommand{\gq}{{\mathfrak q}}

\newcommand{\ga}{{\mathfrak a}}

\newcommand{\gm}{{\mathfrak m}}

\newcommand{\calA}{\mathcal{A}}

\newcommand{\calC}{\mathcal{C}}

\newcommand{\calO}{\mathcal{O}}

\newcommand{\calV}{\mathcal{V}}
\newcommand{\calW}{\mathcal{W}}

\newcommand{\bA}{\mathbf{A}}

\DeclareMathOperator{\Lie}{Lie}

\newcommand{\Gal}{\operatorname{Gal}}

\newcommand{\Pic}{\operatorname{Pic}}

\newcommand{\Hom}{\operatorname{Hom}}

\newcommand{\GL}{\operatorname{GL}}

\newcommand{\Sel}{\operatorname{Sel}}

\renewcommand{\contentsname}{Contents\\{\footnotesize\normalfont(A table
of contents should normally not be included)}}

\newtheorem{auxiliary proposition}[theorem]{Auxiliary Proposition}

\newtheorem{definition}[theorem]{Definition}

\newtheorem{main conjecture}[theorem]{Main Conjecture}
\newtheorem{main theorem}[theorem]{Main Theorem}
\newtheorem{modesty proposition}[theorem]{Modesty Proposition}

\newtheorem{open problem}[theorem]{Open Problem}

\newtheorem{proposition}[theorem]{Proposition}

\newtheorem{remark}[theorem]{Remark}

\newtheorem{convergence lemma}[theorem]{Convergence Lemma}
\newtheorem{corrected lemma}[theorem]{Corrected Lemma}
\newtheorem{growth lemma}[theorem]{Growth Lemma}
\newtheorem{coefficient lemma}[theorem]{Integrality Lemma}
\newtheorem{interpolation lemma}[theorem]{Interpolation Lemma}
\newtheorem{kernel lemma}[theorem]{Kernel Lemma}
\newtheorem{limit lemma}[theorem]{Limit Lemma}
\newtheorem{tandem lemma}[theorem]{Modesty Lemma}
\newtheorem{zero-finding lemma}[theorem]{Zero-Finding Lemma}

\newcommand{\Ig}{\mathrm{Ig}}
\newcommand{\can}{\mathrm{can}}
\newcommand{\om}{\omega}
\newcommand{\fraka}{\mathfrak{a}}
\newcommand{\bfx}{\mathbf{x}}
\newcommand{\bfn}{\mathbf{n}}

\title[~]{Refined conjectures on Fitting ideals of BDP Selmer groups}

\author{Chan-Ho Kim}
\address[Kim]{
Department of Mathematics and Institute of Pure and Applied Mathematics,
Jeonbuk National University,
567 Baekje-daero, Deokjin-gu, Jeonju, Jeollabuk-do 54896, Republic of Korea
}
\email{chanho.math@gmail.com}
\author[Pal]{Aprameyo Pal}
\address[Pal]{
	 Harish-Chandra Research Institute, Chhatnag Road, Jhunsi, Prayagraj 211 019, India.}
\address[Pal]{
     Homi Bhabha National Institute, Training School Complex, Anushakti Nagar, Mumbai 400 094, India.
}
\email{aprameyopal@hri.res.in}

\author[Ray]{Jishnu Ray}
\address[Ray]{Harish-Chandra Research Institute, Chhatnag Road, Jhunsi, Prayagraj 211 019, India.}
\address[Ray]{Homi Bhabha National Institute, Training School Complex, Anushakti Nagar, Mumbai 400 094, India.
}
\email{jishnuray@hri.res.in; jishnuray1992@gmail.com}
\thanks{Chan-Ho Kim was partially supported 
by the National Research Foundation of Korea(NRF) grant funded by the Korea government(MSIT) (No. RS-2025-16067678, RS-2026-25593558) and
by Global-Learning \& Academic research institution for Master’s$\cdot$Ph.D. Students, and Postdocs (LAMP) Program of the National Research Foundation of Korea (NRF) funded by the Ministry of Education (No. RS-2024-00443714). 
Jishnu Ray gratefully acknowledges support from Inspire Research Grant, Department of Science and Technology, Govt. of India.
}
\keywords{refined Iwasawa theory, BDP $p$-adic $L$-function, anticyclotomic extension}
\subjclass[2020]{Primary: 11R23, Secondary: 11G40, 11F67}
\begin{document}

\begin{abstract}
Under mild assumptions, we prove the strong main conjecture for BDP Selmer groups of elliptic curves with good reduction in the sense of Kurihara's refined Iwasawa theory.
In particular, the structure of the initial Fitting ideal of BDP Selmer groups over finite subextensions in the anticyclotomic $\mathbb{Z}_p$-extension is determined by the values of the corresponding     $p$-adic modular form at CM points.
\end{abstract}
\maketitle
\section{Introduction}
\subsection{Overview}
In mid 1980's, Mazur and Tate initiated to develop the ``tame refinements'' of Birch and Swinnerton-Dyer conjecture \cite{mazur-tate}, \cite{darmon-tate}
\footnote{See also Darmon's article `\href{https://celebratio.org/Gross_BH/article/1000/}{Dick Gross’s marvelous mentoring}' in \emph{Celebratio Mathematica}.}.
This idea leads to Iwasawa theory for elliptic curves (and other motives) over finite abelian extensions of the base field.
Also, in their work, several interesting \emph{refined} conjectures are proposed including the weak vanishing conjecture, the weak main conjecture, and a ``Birch-Swinnerton-Dyer type'' conjecture.
Here, the ``weak'' basically means the inequality, not the equality.

Later, Kurihara also developed this idea further and focused on the structure of Selmer groups rather than the size of Selmer groups \cite{kurihara-fitting, kurihara-invent}. It is now known as \emph{refined Iwasawa theory}.
In particular, he formulated the strong main conjecture, which upgrades Mazur--Tates's refined weak main conjecture to the \emph{equality} between finite layer arithmetic invariants and finite layer analytic invariants, under relevant (but inevitably stronger) assumptions.

Around 2008-10, Emerton--Pollack--Weston proved Mazur--Tate's refined weak main conjecture for modular forms up to a power of $p$ by using the techniques from $p$-adic Hodge theory and the $p$-adic local Langlands correspondence for $\mathrm{GL}_2(\mathbb{Q}_p)$  \cite{epw2}. Later, in \cite{kim-kurihara}, Kurihara and one of us also proved Mazur--Tate's refined weak main conjecture for elliptic curves by utilizing Kobayashi--Pollack's signed Iwasawa theory \cite{pollack-thesis, kobayashi-thesis} and Kurihara's strong main conjecture for elliptic curves by using Kato's zeta elements under certain hypotheses.

Recently, the idea of this finite layer Iwasawa theory extends to several other settings including Rankin--Selberg products of modular forms \cite{cauchi-lei} and $\mathbb{Z}^2_p$-extensions \cite{dion}.
The main goal of this article is to apply this idea to the so-called BDP setting \cite{bertolini-darmon-prasanna-duke} and illustrate some interesting consequences.
In particular, we obtain an analogue of Kurihara's strong main conjecture for BDP Selmer groups for both good ordinary and supersingular reduction under certain assumptions.
One remarkable feature  is that the Fitting ideal of BDP Selmer groups over finite subextensions of the anticyclotomic $\mathbb{Z}_p$-extension is principal for both cases. It is unexpected, as it is not true in the classical setting.

Refined Iwasawa theory at finite layers can be more subtle than the standard Iwasawa theory at the infinite layer as we cannot directly apply the theory of Iwasawa modules to finite layer objects and  we cannot ignore ``finite errors''. Also, the structure of modules over group rings at finite layers is much more complicated than the structure of modules over the Iwasawa algebra.
Delicate care is necessary to control finite error terms while descending from the infinite $\Z_p$-extension to finite layer group rings.

\subsection{Review of some recent results on refined Iwasawa theory}
Although Mazur--Tate's original formulation covers on finite abelian extensions of the base field, most works focus on finite subextensions of a $\mathbb{Z}_p$-extension of the base field since their methods are essentially rooted in Iwasawa theory. 
See \cite{kataoka-thesis, kataoka-stark-equivariant} for equivariant refinements.

\subsubsection{The cyclotomic (classical) setting}\label{sec:cyc}
Let $p$ be an odd prime. Let $E$ be an elliptic curve over $\Q$  and $\Q_\infty$ be the cyclotomic $\Z_p$-extension of $\Q$ with subextensions $\Q_n$ such that $\Gal(\Q_n/\Q)\cong \Z/p^n\Z$.  Let $\Lambda=\Z_p \llbracket\Gal(\Q_\infty/\Q) \rrbracket$ and $\Lambda_n=\Z_p[\Gal(\Q_n/\Q)]$.

Suppose that $E$ has good ordinary reduction at $p$ with no rational $p$-torsion, $a_p(E) \not\equiv 1 \pmod{p}$, and $p$ does not divide the Tamagawa number of $E$. Let $f \in S_2(\Gamma_0(N))$ be the newform attached to $E$ and $f_\alpha$ be the $p$-stabilized form of $f$ with unit root $\alpha$.
If the Iwasawa main conjecture  
$$\mathrm{char}_{\Lambda}(\Sel(\Q_\infty, E[p^\infty])^\vee)=(L_p(\Q_\infty,f_\alpha))$$
is true, then it is not difficult to show that 
$$\mathrm{Fitt}_{\Lambda_n}(\Sel(\Q_n,E[p^\infty])^\vee)=(\theta_n(f_\alpha)) = (\theta_n(f), \nu_{n-1, n} (\theta_{n-1}(f)) ).$$
See \cite[\S 2.2]{kim-kurihara} for example.
Here, $\theta_n(f_\alpha)$ is the projection of $L_p(\Q_\infty,f_\alpha)$ to $\Lambda_n$ and 
$\theta_n(f) \in \Lambda_n$ is the \emph{Mazur--Tate element} of $f$ over $\mathbb{Q}_n$ defined using modular symbols as in \cite[\S 1.2.2]{kim-kurihara}.
Here $\nu_{n-1,n}$ is the trace map $\Lambda_{n-1}\rightarrow \Lambda_n$ defined by $\sigma \mapsto \sum_{\tau \mapsto \sigma}\tau$ for $\sigma \in \Gal(\Q_{n-1}/\Q)$ and $\tau$ runs over all elements of $\Gal(\Q_{n}/\Q)$ projecting to $\sigma$. 
Also, $\mathrm{Fitt}_{\Lambda_n}(-)$ denotes the initial Fitting ideal over $\Lambda_n$.
In this case, the Fitting ideal of $\Sel(\Q_{n},E[p^\infty])^\vee$ is principal.
However, it is expected that the corresponding Fitting ideal is not principal when $E$ has supersingular reduction at $p$. 

Suppose now that $E$ has supersingular reduction at $p$ and we keep our running assumptions that $E(\Q)[p]$ is trivial and $p$ does not divide the Tamagawa number of $E$.
When $p \nmid \frac{L(E,1)}{\Omega_E}$ and the residual representation $\overline{\rho}_E$ is surjective, Kurihara confirmed his strong main conjecture \cite[Theorem 0.1(4)]{kurihara-invent}
\begin{equation}\label{nord_cyclo}
\mathrm{Fitt}_{\Lambda_n}(\Sel(\Q_{n},E[p^\infty])^\vee)=\big(\theta_n(f), \nu_{n-1,n}(\theta_{n-1}(f)\big). 
\end{equation}
Note that his work did not assume $a_p(E)=0$. 
Under another set of hypotheses including $a_p(E)=0$, one of the authors and Kurihara proved \eqref{nord_cyclo} by appealing to  Kobayashi's $\pm$-main conjecture (see \cite[Conjecture 3.5]{kim-kurihara}).
In this situation, an additional hypothesis on \textit{fine} Selmer group (see \cite[Theorem 1.20]{kim-kurihara}) is imposed in order to guarantee that the ``error terms'' vanish while descending from the infinite level to finite levels.

\subsubsection{The anticyclotomic direction}
Suppose now that $K$ is an imaginary quadratic field of odd discriminant $-D_K<-4$ with $(D_K, Np)=1$ 
where $N$ is the conductor of $E$. Let $K_\infty$ be the anticyclotomic $\Z_p$-extension of $K$ with subextensions $K_n$ of degree $p^n$ over $K$. 
We write $N = N^+ \cdot N^-$ where a prime divisor of $N^-$ is inert in $K$ and a prime divisor of $N^+$ splits in $K$. 
Denote by $N(\overline{\rho}_E)$ the Artin conductor of the residual representation $\overline{\rho}_E$.
We assume that $p$ splits in $K$ and
 $N^-$ is a square free product of an \textit{odd} number of primes (also known as the \textit{definite} setting).

When $E$ has good ordinary reduction at $p \geq 5$, we further assume that
\begin{itemize}
    \item $\overline{\rho}_E$ is surjective,
    \item $N = N(\overline{\rho}_E) $ so that $p$ does not divide the Tamagawa factors of $E$, and
    \item $a_p(E) \not\equiv 1 \pmod{p}$.
\end{itemize}
In this situation, one of us showed the strong version of the refined main conjecture (cf. \cite[Thm. 1.1 and Lem. 2.2]{kim2026})
$$\mathrm{Fitt}_{\Lambda_n}(\Sel(K_n,E[p^\infty])^\vee)=(\theta_n(f_\alpha))^2.$$
Here $\theta_n(f_\alpha)$ is the \emph{Bertolini--Darmon's theta element} attached to the $p$-stabilized  form $f_\alpha$ with unit root $\alpha$ (cf. \cite[\S 2.2, \S 2.3]{kim2026}). 

When $E$ has supersingular reduction at $p \geq 5$ (so $a_p(E)=0$), we further assume that
\begin{itemize}
    \item $\overline{\rho}_E$ is surjective,
    \item $K_\infty$ is totally ramified at every prime lying above $p$, and
    \item if a prime $\ell$ divides $N^-$ and $\ell^2 \equiv 1 \pmod{p}$, then $\overline{\rho}_E$ is ramified at $\ell$.  
\end{itemize}
Then the \textit{weak} version of the refined main conjecture could be obtained, i.e. 
$$\left( \theta_n(E/K) \right)^2  = \left( \theta_n(E/K) \cdot \iota(\theta_n(E/K)) \right) \subseteq \mathrm{Fitt}_{\Lambda_n}(\Sel(K_n,E[p^\infty])^\vee).$$
Here, $\iota$ is the involution on $\Lambda_n$ defined by inverting group-like elements.
Note that this uses the one-sided divisibility of the signed main conjectures.
; these conjectures are necessary to assume in order to  upgrade this weak refined result to a stronger equality of ideals (see \cite[Rem. 3.6]{kim2026anticyclotomic}). 
More recently, Shii has proved this weak refined conjecture in the definite setting when $p$ is inert in $K$ (cf. \cite{shii2025}).

\subsection{Our work in this paper}
In this paper we will work over the anticyclotomic $\mathbb{Z}_p$-extension of an imaginary quadratic field for the BDP-Selmer group and the BDP $p$-adic $L$-function and we cover both the ordinary and the supersingular cases. In the supersingular case we will work in the \textit{indefinite} setting (i.e. $N^-$ is the square free product of an even number of primes). This indefinite case has not been dealt in the existing literature yet and is our main interest in this paper.  We will first recall  the BDP $p$-adic $L$ function and the corresponding BDP Selmer group  the following section. 
\subsection{The BDP $p$-adic $L$-function following  Castella--Hsieh}
\label{subsec:setup}
Suppose $f$ is a newform of weight $2r \geq 2$ and of level $\Gamma_0(N)$. Suppose $K/\Q$ is an imaginary quadratic field of odd discriminant $-D_K<-3$ coprime to $Np$ and assume that $\chi: G_K:=\Gal(\overline{\Q}/K) \rightarrow F^\times$ is a locally algebraic anticyclotomic character. Let $ p$ be an odd prime not dividing $N$ which splits as $p \mathcal{O}_K= \gp \cdot \gp^c$.  Let $V_f$ the self dual Tate twist of the $p$-adic Galois representation associated to $f$ by Deligne and consider the $G_K$-representation $V_{f,\chi}:=V_f(r) \otimes \chi$. This is a conjugate self-dual representation and the associated Rankin $L$-series $L(f, 
\chi, s)$ satisfies a functional equation relating $s$ and $2r-s$. Now suppose $\Gamma:=\Gal(K_\infty/K)$ be the Galois group of the anticyclotomic $\Z_p$-extension of $K$ and $\Lambda$ be the Iwasawa algebra $\Z_p \llbracket \Gamma \rrbracket$. 
Let $\calW$ be the completion of the ring of integers of the maximal unramified extension of $\mathbb{Q}_p$.

Write $c=c_0p^s$ with $p \nmid c_0$. Let us suppose that  $\chi=\psi \phi_0$ where $\psi$ is an anticyclotomic character of infinity type $(r,-r)$ and conductor $c_0\mathcal{O}_K$ and $\phi_0$ is a $p$-adic character of $\Gamma$. Castella--Hsieh define the $p$-adic $L$-function $$\mathscr{L}_{\gp, \psi}(f)\in \Lambda^{\mathrm{ur}} = \calW  \llbracket \Gamma  \rrbracket$$
interpolating the central critical values $L(f,\psi \phi, r)$ as $\phi$ runs over a Zariski-dense subset of $p$-adic characters of $\Gamma$.
This $p$-adic $L$-function was first defined in the work of Bertolini--Darmon--Prasanna \cite{bertolini-darmon-prasanna-duke} where they deduced a formula relating the values of
$\mathscr{L}_{\gp, \psi}(f)$ at \textit{unramified characters outside} the range of interpolation to the $p$-adic Abel-Jacobi images of generalized Heegner cycles. For the rest of the introduction, we assume that $f$ is associated with an elliptic curve $E$ over $\Q$ and hence $r=1$. Consequently, $\psi$ is an anticyclotomic Hecke character of infinity type $(1,-1)$. As in \cite[p. 540, line -2]{CGLS2022}, for every character $\xi$ of $\Gamma$, define $\mathcal{L}_E\in \Lambda^\mathrm{ur}$ as $$\mathcal{L}_E(\xi)=\mathscr{L}_{\gp,\psi}(\psi^{-1}\xi)^2.$$ 

The Iwasawa main conjecture predicts that $\mathcal{L}_E(\xi)$ generates the characteristic ideal of the Pontryagin dual of a certain Selmer group attached to $f$ over $K_\infty$ which is the so-called BDP Selmer group $\Sel^{\emptyset, 0}(K_\infty, E[p^\infty])$ (cf. \S \ref{sec:BDP Selmer}); its Pontryagin dual is denoted as $X^{\emptyset,0}(f)$.  We can also define the BDP Selmer group $\Sel^{\emptyset, 0}(K_n, E[p^\infty])$ over the finite layers $K_n$ (cf. \S \ref{sec:BDP Selmer}) and we have $X^{\emptyset, 0}=\varprojlim_n \Sel^{\emptyset, 0}(K_n,E[p^\infty])^\vee$.  The \textit{integral} version of the Iwasawa main conjecture predicts (under various hypotheses depending, among others,  on the reduction type of $E$ at $p$, cf. \S \ref{sec:ord} and \S \ref{sec:supersingular}) that  
\begin{equation}\label{BDP}
   X^{\emptyset,0}\text{ is a torsion } \Lambda\text{-module and }  \mathrm{Char}_{\Lambda}(X^{\emptyset,0})\Lambda^{\mathrm{ur}}=(\mathcal{L}_E) 
\end{equation}
as ideals in $\Lambda^{\mathrm{ur}}.$
This conjecture is known to be true in the ordinary case by \cite{burungale-castella-skinner-gl2} and in the non-ordinary case with $a_p=0$ by 
\cite{castella-wan-perrin-riou-ss} (see \S \ref{sec:BDPselmer} for detailed necessary hypotheses).

We now list all our necessary hypotheses in order to state our main results precisely.

For each prime number $v$ of $K$, let $c_v(E):=\big|E(K_v)/E_0(K_v)\big|$ be the Tamagawa number of $E$ at $v$. The  assumption we impose is
\begin{equation}
\tag{Tam} \text{$p\nmid c_v(E)$ for all primes $v\,|\,N$.}
\end{equation}
We also assume the following condition:
\begin{equation}
\tag{$\mathrm{H^0}$} \text{$H^0(K_v,A)=0$ for all primes $v\,|\,p$.}
\end{equation}
Furthermore we will assume all the hypotheses necessary to have the main conjecture for BDP Selmer groups \eqref{BDP}.
When $E$ is of good ordinary reduction at $p$, these hypotheses are listed in  \S \ref{sec:ord} and when $E$ has supersingular reduction at $p$ with $a_p(E)=0$ see \S \ref{sec:supersingular} for the list of hypotheses.
\begin{theorem}
    Assume the hypotheses $\mathrm{(Tam)}$, $\mathrm{(H^0)}$  with the discriminant $-D_K<-3$ such that $(D_K,pN)=1$.
    \begin{enumerate}
        \item If $E/\Q$ has good ordinary reduction at $p$, assume the hypotheses (1) - (5) in \S \ref{sec:ord}.
        \item  If $E/\Q$ has supersingular reduction at $p$, assume the hypotheses (1) - (6) in \S \ref{sec:supersingular}.
    \end{enumerate}
Then
$\mathrm{Fitt}_{\Lambda_n}\left(\Sel^{\emptyset, 0}\big(K_n, E[p^\infty])\big)^\vee\right)$  is the principal ideal generated by the square of a BDP analogue of theta element $\theta_{n}(f)$ which is explicitly  defined as a collection of the values of the corresponding $p$-adic modular forms at CM points in \S \ref{def_theta} in $\Lambda_n$.
\end{theorem}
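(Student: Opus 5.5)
The plan is to deduce the finite-layer statement from the Iwasawa main conjecture \eqref{BDP} by a control theorem with no error terms, in the spirit of the cyclotomic ordinary argument recalled in \S\ref{sec:cyc}. Write $\omega_n=\gamma^{p^n}-1$ and $\Lambda_n=\Lambda/\omega_n$. I would first show that $X^{\emptyset,0}$ has no nonzero pseudo-null submodule; in fact I would aim for projective dimension at most one. Under the main conjecture hypotheses one knows that $X^{\emptyset,0}$ is $\Lambda$-torsion and $\mu$-invariant zero. If it also has no finite submodule, then $X^{\emptyset,0}$ is free over $\Z_p$ of finite rank, so it has a square presentation
\[
0\to\Lambda^m\xrightarrow{A}\Lambda^m\to X^{\emptyset,0}\to 0 .
\]
This gives $\mathrm{Fitt}_\Lambda(X^{\emptyset,0})=\mathrm{Char}_\Lambda(X^{\emptyset,0})=(\det A)$. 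Combining with \eqref{BDP} after base change to $\Lambda^{\mathrm{ur}}$ gives $(\det A)\Lambda^{\mathrm{ur}}=(\mathcal L_E)$. Since the BDP Selmer condition is strict at $\gp^c$ and relaxed at $\gp$, there is no signed local condition even in the supersingular case. I expect the absence of finite submodules to follow from the standard Greenberg argument: the Poitou--Tate sequence, $H^2(K_S/K_\infty,E[p^\infty])=0$, and surjectivity of the global-to-local map defining $\Sel^{\emptyset,0}$. The hypotheses $(\mathrm{H^0})$ and the irreducibility of $\overline\rho_E$ (from the assumptions in \S\ref{sec:ord}/\S\ref{sec:supersingular}) enter here.

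Second, I would prove an exact control theorem
\[
\Sel^{\emptyset,0}(K_n,E[p^\infty])\xrightarrow{\ \sim\ }\Sel^{\emptyset,0}(K_\infty,E[p^\infty])^{\Gamma^{p^n}} .
\]
For this I compare the restriction maps via the snake lemma. The global kernel $H^1(\Gamma_n,E(K_\infty)[p^\infty])$ vanishes since $E(K_\infty)[p]=0$. At $\gp^c$ the condition is $0$, so the relevant local kernel is $H^1(\Gamma_{n,w},H^0(K_{\infty,w},A))$, which vanishes by $(\mathrm{H^0})$ because $\Gamma$ is pro-$p$. At $\gp$ the condition is relaxed, so no contribution arises. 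At primes $v\mid N$ the unramified-quotient kernels are controlled by (Tam). Primes inert in $K$ split completely in $K_\infty$, and there the kernel is trivial.

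Dually, this gives $\Sel^{\emptyset,0}(K_n,E[p^\infty])^\vee\cong X^{\emptyset,0}/\omega_n X^{\emptyset,0}$. Hence
\[
\mathrm{Fitt}_{\Lambda_n}\big(\Sel^{\emptyset,0}(K_n,E[p^\infty])^\vee\big)=(\det A \bmod \omega_n),
\]
because Fitting ideals commute with base change. This ideal is principal, which explains the phenomenon advertised in the introduction. Surjectivity in the control theorem is also needed; I expect it to follow from the vanishing of the relevant $H^2$ or of $\Sha$-type cokernels, using the torsionness of $X^{\emptyset,0}$.

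Third, I must identify $\det A\bmod\omega_n$ with $\theta_n(f)^2$, up to a unit. By construction in \S\ref{def_theta}, $\theta_n(f)$ should be the image of $\mathscr L_{\gp,\psi}(f)$ in $\calW[\Gamma/\Gamma^{p^n}]$, given by the finite sum of values of the $p$-adic modular form at the CM points of conductor $p^n$. I then need the equality of ideals $(\mathcal L_E)=(\det A)$ to descend from $\Lambda^{\mathrm{ur}}$ to $\Lambda$. Faithful flatness of $\Lambda\to\Lambda^{\mathrm{ur}}$ gives equality of ideals over $\Lambda_n^{\mathrm{ur}}$. One then reads $\theta_n(f)^2$, which is defined over $\calW$, as generating the same principal ideal as the $\Lambda_n$-Fitting ideal after extension of scalars. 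By faithful flatness, this is the precise meaning of the statement.

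The main obstacle is the exact control, and above all the absence of finite submodules in the supersingular case. There the relaxed condition at $\gp$ makes the local cohomology of the formal group large, and I would have to verify carefully that the global-to-local map is surjective at every finite layer, not merely at $K_\infty$. For this I would first try the argument of Castella--Wan on BDP Selmer groups together with the total ramification of $\gp$ in $K_\infty$.
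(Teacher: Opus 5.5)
Your proposal is correct and follows the paper's proof. The main conjecture together with Greenberg-style absence of finite submodules gives $\mathrm{Fitt}_\Lambda(X^{\emptyset,0})\Lambda^{\mathrm{ur}}=(\mathcal{L}_E)$. Base change of Fitting ideals and the exact control theorem under (Tam) and $(\mathrm{H^0})$ then carry this to $\Lambda_n$, where the image of $\mathcal{L}_E$ modulo $\omega_n$ is identified with $\theta_n(f)^2$ (equivalently $\theta_n(f)\,\iota(\theta_n(f))$ up to a group-like unit).

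Two small corrections. First, you do not need $\mu=0$, which is neither among the hypotheses nor required: a finitely generated torsion $\Lambda$-module with no nonzero finite submodule already has projective dimension at most one, so the square presentation exists. Second, surjectivity in the control theorem needs nothing beyond your snake-lemma inputs, namely $A(K_\infty)=0$ and injectivity of the local restriction maps.
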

One of the interesting features is that the shape of the formula of the BDP theta element $\theta_n(f)$ is independent of the reduction type of $E$ at places above $p$.
In particular, as mentioned before, this initial Fitting ideal of the BDP Selmer group at $K_n$ is principal even when $E$ has supersingular reduction at $p$.
In the cyclotomic formulation, it never happens as explained in \S \ref{sec:cyc}.


\subsubsection*{Organization of the article}
In \S \ref{sec:2.1pre} - \S \ref{sec:2.4BDp} we recall the definition of the BDP $p$-adic $L$-function following \cite{castella-hsieh-heegner-cycles}. The descent argument of the reduction of this BDP $p$-adic $L$-function at finite layer is treated in \S \ref{sec:2.5BDP} with the BDP theta element defined in \S \ref{def_theta}. Next we discuss the Iwasawa main conjecture for BDP Selmer group in \S \ref{sec:BDP Selmer}. After recalling some algebraic preliminaries in \S \ref{quick}, we finally show in \S \ref{final} that $\theta_{n}(f)$ generates the initial Fitting ideal of the Pontryagin dual of the BDP Selmer group over $K_n$.

\section{BDP $p$-adic $L$-functions and BDP theta elements}\label{sec:preliminaries}
In this section, we review the analytic construction of the BDP $p$-adic $L$-function following the exposition given in \cite{castella-hsieh-heegner-cycles} closely.
\subsection{Preliminaries on modular forms} \label{sec:2.1pre}
\subsubsection{Igusa towers} Assume that the prime $p$ is coprime to  $N\geq 3$, and let $\Ig(N)_{/\Z_{(p)}}$ be the Igusa scheme over $\Z_{(p)}$.  This is the moduli space parameterizing elliptic curves with $\Gamma_1(Np^\infty)$-level structure. For each locally noetherian scheme $S$ over $\Z_{(p)}$, $\Ig(N)(S)$ is the set of isomorphism classes of pairs $(A,\eta)$ consisting of an elliptic curve $A$ over $S$ and a $\Gamma_1(Np^\infty)$-level structure 
\begin{equation}\label{eq:immersion}
  \eta=(\eta^{(p)},\eta_p):\mu_N\oplus \mu_{p^\infty}\hookrightarrow A[N]\oplus A[p^\infty]  
\end{equation}
Here $\mu_N$ is the group scheme of $N$-th  roots of unity and \eqref{eq:immersion} is
 an immersion as group schemes over $S$. Letting $\mathbb{H}$ be the complex upper half-plane. We have an explicit map 
\[ \mathbb{H}\times \mathrm{GL}_2(\hat{\Q})\to \Ig(N)(\C),\quad x=(\tau_x,g_x)\mapsto [(A_x,\eta_x)]. \]
For explicit construction of $[(A_x,\eta_x)]\in \Ig(N)(\C)$ corresponding to each $x$, the reader is advised to consult \cite[p. 573]{castella-hsieh-heegner-cycles}.

\subsubsection{Geometric modular forms}
Let $({\rm Tate}(q),\eta_{\mathrm{can}},\omega_{\can})$ be the Tate elliptic curve $\mathbb{G}_m/q^\Z$
with the canonical level structure $\eta_{\can}$ and the canonical differential $\om_{\can}$ over $\Z(\!(q)\!).$
\begin{definition}
Let  $B$ be a $\Z_{(p)}$-algebra, $k$ be an integer and $C$ be a $B$-algebra. Let $[(A,\eta)]$ be a point in $ \Ig(N)(C)$ and $\omega$ be a basis  of $H^0(A,\omega_{A/C})$ over $C$.  A geometric modular form $f$ of weight $k$ on $\Ig(N)$ defined over $B$
is a rule that assigns to every triple $(A,\eta,\omega)$ over $C$,  a value $f(A,\eta,\omega)\in C$ such that the following conditions are met:
\begin{itemize}
\item[(G1)] $f(A,\eta,\omega)=f(A',\eta',\omega')\in C$ if $(A,\eta,\omega)\cong (A',\eta',\omega')$ over $C$.
\item[(G2)] For any  $B$-algebra homomorphism $\varphi:C\to C'$, we have
$f((A,\eta,\omega)\otimes_C C')=\varphi(f(A,\eta,\omega)).$
\item[(G3)] $f(A,\eta,t\omega)=t^{-k}f(A,\eta,\omega)$ for all $t\in C^\times$.
\item[(G4)] The value of $f({\rm Tate}(q),\eta_{\can},\om_{\can})$ is an element of $ B  \llbracket q  \rrbracket$. This power series in 
$ B  \llbracket q  \rrbracket$
is called the algebraic Fourier expansion of $f$. 
\end{itemize}
\end{definition}
Suppose $f$ is a geometric modular form of weight $k$  over a subring $\calO\subset\C$.  Define a holomorphic function $\mathbf{f}:\mathbb{H}\times\GL_2(\hat{\Q})\to\C$ by the rule
\[\mathbf{f}(x)=f(A_x,\eta_x,2\pi i dw),\quad  x\in \mathbb{H}\times \GL_2(\hat{\Q}),\]
where $w$ is the standard complex coordinate of $A_x=\C/L_x$. Here $L_x$ is the period lattice of $A_x$ attached to the standard differential form $dw$ (cf. \cite[p. 573]{castella-hsieh-heegner-cycles}).

Let $U_0(Np^n)=\{g\in \GL_2(\hat{\Z})\mid g\equiv \begin{pmatrix}
  * & *\\ 
  0 & *
\end{pmatrix}\pmod{Np^n}\}$.
We say that the geometric modular form $f$ is of level $\Gamma_0(Np^n)$ if $\mathbf{f}(\tau,gu)=\mathbf{f}(\tau,g)$ for all $u\in U_0(Np^n)$. 
One can  show that the function $\mathbf{f}(-,1):\mathbb{H}\to\C$ is a classical elliptic modular form of weight $k$ with analytic Fourier expansion
\[\mathbf{f}(\tau,1)=\sum_{n\geq 0}\mathbf{a}_n(f) e^{2\pi i n\tau}.\]
Furthermore, one can show that 
$f({\rm Tate}(q),\eta_{\can},\om_{\can})=\sum_{n\geq 0}\mathbf{a}_n(f) q^n\in \calO  \llbracket q  \rrbracket.$

\subsubsection{$p$-adic modular forms}
Let $R$ be a $p$-adic ring, and define the formal completion of $\Ig(N)$ over $R$ as $\widehat{\Ig}(N)_{/R}:=\varinjlim_m \Ig(N)_{/R/p^m R}$. Let $V_p(N,R)$ be the space of $p$-adic modular forms of level $N$ which is defined as 
\begin{align*}
V_p(N,R)&:=H^0(\widehat{\Ig}(N)_{/R},\calO_{\widehat{\Ig}(N)_{/R}})\\
&=\varprojlim_m H^0(\Ig(N),\calO_{\Ig(N)}\otimes R/p^mR).
\end{align*}
An element $f \in V_p(N,R)$ (i.e. a $p$-adic module form) is said to be of weight $k\in\Z_p$ if for every $u\in\Z_p^\times$, we have
$$f(A,\eta)=u^{-k}f(A,\eta^{(p)},\eta_p u),\quad [(A,\eta)]=[(A,\eta^{(p)},\eta_p)]\in\widehat{\Ig}(N)_{/R}.$$

Suppose $f$ is a geometric modular form defined over $R$ and $C$ be a complete local $R$-algebra. Then, the associated $p$-adic modular form $\hat{f}$ is defined by 
\[\widehat{f} (A,\eta)=f(A,\eta,\widehat{\om}(\eta_p)),\quad[(A,\eta)]\in\widehat{\Ig}(N)_{/R}.
\]
Here $\widehat{\omega}(\eta_p)\in\Lie(A)=\Lie(\widehat{A})\cong C$ is a differential which originates from the 
 isomorphism $\widehat{\eta}_{p}:\widehat{\mathbf{G}}_m\cong \widehat{A}$ ($\widehat{A}$ is the formal group of $A$) obtained using the $p^\infty$-level structure $\eta_p$.

We note that if $f$ is a geometric modular form of weight $k$ and level $\Gamma_0(Np^n)$,
then $\widehat{f}$ is a $p$-adic modular form of weight $k$.

\subsection{CM points}
Suppose $K$ is an imaginary quadratic field of discriminant $-D_K<0$ and $p>2$ is a prime that splits in $\calO_K$ as $p\calO_K=\gp\bar{\gp}$ where $\gp$ is the prime ideal above $p$ determined by the embedding $\iota_p:\bar{\Q} \rightarrow \C_p$. 
Assume that $p \nmid h_K$ where $h_K$ is the class number of $K$.
Let
$$\vartheta = \dfrac{D' + \sqrt{-D_K}}{2} \in K$$
where $D' = D_K$ if $2 \nmid D_K$ and $D' = D_K/2$ if $2 \mid D_K$. Let $z \mapsto \bar{z}$ be the complex conjugation on $\C$.
Assume that $N\calO_K=\mathfrak{N}\bar{\mathfrak{N}}$ for some ideal $\mathfrak{N}$ of $\calO_K$.
Let $c$ be a positive integer, let $\calO_c:=\Z+c\calO_K$ be the order of conductor $c$, and let $K_c$ be the ring class field of $K$ of conductor $c$. Let $\mathfrak{a}$ be a fractional ideal of $\calO_c$, and let $a\in \widehat{K}^\times$ with $a\widehat{K}\cap \calO_c=\mathfrak{a}$. To such a pair $(\mathfrak{a},a)$, one can associate a $\C$-pair $(A_\mathfrak{a},\eta_a)$ of complex CM elliptic curves with $\Gamma_1(Np^\infty)$-level structure (see \cite[p. 576]{castella-hsieh-heegner-cycles} for the construction). 

Let $\calV$ be the valuation ring $\iota_p^{-1}(\calO_{\C_p})\cap K^\mathrm{ab}$. Using the theory of complex multiplication 
along with the criterion of Serre--Tate  it can be shown that 
$[(A_\mathfrak{a},\eta_a)]$ is actually defined over $\calV_0$ (a discrete valuation ring in $\calV$) and hence it belongs to $\Ig(N)(\calV_0)$. Henceforth, the point
 $[(A_\fraka,\eta_a)]\in \Ig(N)(\calV)$ is called the CM point attached to $(\fraka,a)$.

If $\fraka$ is an integral ideal of $\calO_c$ which is coprime to $\mathfrak{N}\mathfrak{p}$, then we write $(A_\fraka,\eta_\fraka)$ for the pair $(A_\fraka,\eta_a)$ with $\mathfrak{q}$-component $a_\gq=1$ for all prime $\gq\mid\mathfrak{N}\gp$. If $\fraka=\calO_c$, then we write $(A_c,\eta_c)$ for $(A_{\calO_c},\eta_{\calO_c})$. 

Let $Y_1(Np^n)/\Q$ be the usual open modular curve with $\Gamma_1(Np^n)$ level structure. For  $g\in\GL_2(\hat{\Q})$, let $[(\vartheta,g)]$ be the image of $(\vartheta,g)$ in $\varprojlim_n Y_1(Np^n)(\C)=\Ig(N)(\C)$.
By Shimura's reciprocity law for CM points  one obatins that $[(\vartheta,g)]\in \Ig(N)(K^{\mathrm{ab}})$ and
$$\mathrm{rec}_K(a)[(\vartheta,g)]=[(\vartheta,\bar{a}g)]\quad(a\in\widehat{K}^\times).$$
Here $\mathrm{rec}_K:K^\times\backslash \widehat{K}^\times\mapsto \Gal(K^{\mathrm{ab} }/K)$ is the \emph{geometrically normalized} reciprocity law map.
More generally, if $a\in\widehat{K}^{(cp)\times}$ and $\fraka=a\widehat{\calO}_c\cap K$ is a fractional ideal of $\calO_c$, then define 
\[\sigma_\fraka:=\mathrm{rec}_K(a^{-1})|_{K_c(\gp^\infty)}\in \Gal(K_c(\gp^\infty)/K).\]
Here $K_c(\gp^\infty)$ is the compositum of  the ray class field of $K$ of conductor $\gp^\infty$ and $K_c$.
Under this setting we have (see \cite[eq. (2.5)]{castella-hsieh-heegner-cycles} for clarification)
$$x_\fraka:=[(A_\fraka,\eta_a)]=[(\vartheta, \bar{a}^{-1}\xi_c)]=x_c^{\sigma_\fraka}\in \Ig(N)(K_c(\gp^\infty)).$$
Here $\xi_c $ is a certain element of $\GL_2(\hat{\Q})$ defined as in \cite[before eq. (2.3)]{castella-hsieh-heegner-cycles}.

Suppose $\fraka$ is a fractional ideal of $\calO_c$ coprime to $\gp\mathfrak{N}$ and suppose that $p\nmid c$,
then $(A_\fraka,\eta_\fraka)$ is defined over $\calV^{\mathrm{ ur}}:=\calW\cap K^{\mathrm{ ab}}.$
\subsection{Serre--Tate coordinates}
Suppose $\mathbf{x}=[(A_0,\eta)]\in  \Ig(N)(\bar{\mathbb{F}}_p)$ and let $\widehat{ S}_{\mathbf{x}}\hookrightarrow \Ig(N)_{/\calW}$ be the local deformation space of $\bfx$ over $\calW$. Suppose $A_0^t$ is the dual abelian variety of $A_0$ and $T_p(A_0^{t})=\varprojlim_n A_0^{t}[p^n](\bar{\mathbb{F}}_p)$ is the $p$-adic Tate module of $A_0^{t}$. Let $\lambda_{\mathrm{can}}: A_0\cong A_0^{ t}$ be the canonical principal polarization.

The $p^\infty$-level structure $\eta_p$ gives a point $P_\bfx\in T_p(A_0^{t})$.
Let
$q_\calA: T_p(A_0)\times T_p(A_0^{ t})\mapsto 1+\gm_R$ be the Serre--Tate bilinear form attached to a deformation $\calA_{/R}$ over a local Artinian ring $(R,\gm_R)$.
The \emph{canonical Serre--Tate coordinate} $t:\widehat{S}_{\mathbf{x}}\mapsto\widehat{\mathbf{G}}_m$ is defined by \[t(\calA):=q_\calA(\lambda_{\mathrm{can}}^{-1}(P_\bfx),P_\bfx).\]
This gives an identification $\calO_{\widehat{S}_\bfx}=\calW \llbracket t-1 \rrbracket.$

Let  $f\in V_p(N,\calW)$.
We define the  $t$-expansion $f(t)$ of $f$ around $\bfx$ as \[f(t)=f|_{\widehat{ S}_\bfx}\in \calW \llbracket t-1 \rrbracket.\]
Let $\mathrm{d} f$ be the $p$-adic measure on $\Z_p$ with the property
\[\int_{\Z_p}t^x \mathrm{d} f(x)=f(t).\]
Furthermore, for any continuous function $\phi:\Z_p\mapsto\calO_{\C_p}$, we define $f\otimes \phi(t)\in\calO_{\C_p} \llbracket t-1 \rrbracket$ by
\[f\otimes\phi(t)=\int_{\Z_p}\phi(x)t^x \mathrm{d} f=\sum_{n\geq 0}\int_{\Z_p}\phi(x){x\choose n} \mathrm{d} f(x)\cdot (t-1)^n.\]
Assume that $c$ is a positive integer coprime to $p$. Define $\mathrm{N}(\fraka)$ by \begin{align*}\mathrm{N}(\fraka)&:=\text{degree of the $\Q$-isogeny }\C/\calO_K\mapsto\C/\fraka^{-1}
\\
&=c^{-1}\#(\calO_c/\fraka)=c^{-1}|a|_{\mathbf{A}_K}^{-1}.\end{align*}

Let $\fraka$ be an integral ideal of $\calO_c$ coprime to $c\mathfrak{N} p$, and as before let $a\in\widehat{K}^{(c\mathfrak{N} p)\times}$ be such that $\fraka=a\widehat{\calO}_c\cap K$. Attached to the pair $(\fraka, a)$, we have a CM point
 $x_\fraka=[(A_\fraka,\eta_\fraka)]\in\Ig(N)(\calV)$. 
Define $\bfx_\fraka:=x_\fraka\otimes_\calV\bar{\mathbb{F}}_p$ and 
 let $t$ be the canonical Serre--Tate coordinate of $\bfx_\fraka$.
For $z\in\Q_p$, define
$$\bfn(z):=\left( \begin{array}{cc}
1 & z \\
0 & 1
\end{array} \right)\in\GL_2(\Q_p)\subset\GL_2(\hat{\Q}).$$
Define $x_\fraka*\bfn(z)$ as 
\[x_\fraka*\bfn(z):=[(\vartheta,\bar{a}^{-1}\xi_c\bfn(z))]\in\Ig(N)(\calV).\]
Let $q$ be any prime, $\zeta_{q^n}:=\mathrm{exp}(\frac{2 \pi i}{q^n})$ and $\phi:\Z_q^\times \rightarrow \C^\times$ be a continuous character of conductor $q$. Define the Gauss sum $\mathfrak{g}(\phi)$ as $$\mathfrak{g}(\phi)=\sum_{u \in (\Z/q^n\Z)^\times}\phi(u)\zeta_{q^n}^u.$$
We quote the following proposition from \cite[Proposition 3.3]{castella-hsieh-heegner-cycles}
\begin{proposition}\label{prop:tensor}
    Let $f\in V_p(N,\calW)$  with $t$-expansion $f(t)$ around $x_\fraka\otimes\bar{\mathbb{F}}_p$. Set
\[f_\fraka(t):=f(t^{\mathrm{N}(\fraka)^{-1}\sqrt{-D_K}^{-1}}).\]
Suppose $\phi:(\Z/p^n\Z)^\times \mapsto \calO_{\C_p}^\times$ is a primitive Dirichlet character, then
\[f_\fraka\otimes \phi(x_\fraka)=p^{-n}\mathfrak{g}(\phi)\sum_{u\in(\Z/p^n\Z)^\times}\phi^{-1}(u)\cdot f(x_\fraka*\mathbf{n}(up^{-n})).\]

\end{proposition}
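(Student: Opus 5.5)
Via the measure $\mathrm{d}f$, the proof reduces to a finite Fourier computation on $\Z/p^n\Z$, combined with the description of how the unipotent action $\bfn(z)$ moves the Serre--Tate coordinate. The key geometric input is the following. Translating the level structure by $\bfn(up^{-n})$ at $p$ moves the CM point $x_\fraka$ inside its own residue disc $\widehat{S}_{\bfx_\fraka}$. In the canonical Serre--Tate coordinate $t$ attached to $\bfx_\fraka$, this point is a $p$-power root of unity. Concretely, I would first establish
\[
t\bigl(x_\fraka*\bfn(up^{-n})\bigr)=\zeta_{p^n}^{\,u\,\mathrm{N}(\fraka)\sqrt{-D_K}}, \qquad t(x_\fraka)=1 .
\]
Equivalently, $f_\fraka$ evaluated at the deformation corresponding to $x_\fraka*\bfn(up^{-n})$ equals the $t$-expansion evaluated at $t=\zeta_{p^n}^{u}$. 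The exponent $\mathrm{N}(\fraka)^{-1}\sqrt{-D_K}^{-1}$ in the definition of $f_\fraka$ is there precisely to absorb this scaling. To prove the identity, I would use the explicit description of $A_\fraka=\C/L$ with lattice attached to $(\vartheta,\bar a^{-1}\xi_c)$. Right multiplication by $\bfn(z)$ changes the $p^\infty$-level structure by adding $z$ times the étale generator to the connected part. The Serre--Tate parameter of the quasi-canonical deformation so obtained is computed from the Weil pairing $e_{p^n}$ between the image of the connected generator and the polarization-transported $P_\bfx$. The Weil pairing on $A_\fraka$ introduces the factor $\mathrm{N}(\fraka)\sqrt{-D_K}$, since the polarization on $\C/\fraka^{-1}$ is normalized via $\vartheta-\bar\vartheta=\sqrt{-D_K}$ and the isogeny degree $\mathrm{N}(\fraka)$. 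This is the step I expect to be the main obstacle: keeping the normalizations of $\xi_c$, $\lambda_{\can}$ and the geometric reciprocity straight so that the root of unity comes out exactly as $\zeta_{p^n}^{u}$ after rescaling, rather than as a conjugate or an inverse.

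Granting this, the rest is a Fourier computation. Since $\phi$ factors through $(\Z/p^n\Z)^\times$ and is primitive, I extend it by zero to $\Z/p^n\Z$ and use the finite Fourier inversion for primitive characters:
\[
\phi(x)=\frac{\mathfrak g(\phi)}{p^n}\sum_{u\in(\Z/p^n\Z)^\times}\phi^{-1}(u)\,\zeta_{p^n}^{-ux}\quad(\text{up to the sign convention } u\mapsto -u),
\]
which is valid for all $x\in\Z/p^n\Z$, including $p\mid x$, by primitivity. This follows from the standard identity $\sum_u\phi^{-1}(u)\zeta^{ux}=\phi(x)\mathfrak g(\phi^{-1})$ together with $\mathfrak g(\phi)\mathfrak g(\phi^{-1})=\phi(-1)p^n$. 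I would fix the sign so that it matches the orientation of the first step.

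Now substitute into $f_\fraka\otimes\phi(t)=\int_{\Z_p}\phi(x)\,t^{x}\,\mathrm{d}f_\fraka(x)$, where $\mathrm{d}f_\fraka$ is the pushforward of $\mathrm{d}f$ under $x\mapsto \mathrm{N}(\fraka)^{-1}\sqrt{-D_K}^{-1}x$. Interchanging the finite sum with the integral gives
\[
f_\fraka\otimes\phi(t)=p^{-n}\mathfrak g(\phi)\sum_u\phi^{-1}(u)\,f_\fraka(\zeta_{p^n}^{u}t),
\]
using $\int \zeta^{ux}t^x\,\mathrm{d}f_\fraka=f_\fraka(\zeta^u t)$; this is legitimate because $\zeta^u t-1$ lies in the maximal ideal, so the power series converges. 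Evaluating at $x_\fraka$, i.e. $t=1$, and invoking the first step identifies $f_\fraka(\zeta_{p^n}^u)$ with $f(x_\fraka*\bfn(up^{-n}))$. This yields the stated formula, and it only remains to check that the sign conventions agree.
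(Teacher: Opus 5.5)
Your two-step strategy matches the structure behind the cited result: first compute the Serre--Tate coordinate of the translated CM points, then expand the primitive character additively. The paper itself gives no proof and simply quotes Castella--Hsieh, Prop.~3.3. The Fourier half is fine, but with the paper's normalization $\mathfrak{g}(\phi)=\sum_u\phi(u)\zeta_{p^n}^u$ the inversion formula is exactly $\phi(x)=p^{-n}\mathfrak{g}(\phi)\sum_u\phi^{-1}(u)\zeta_{p^n}^{-ux}$ for all $x\in\Z_p$. What you actually get is
\[
f_\fraka\otimes\phi(t)=p^{-n}\mathfrak{g}(\phi)\sum_{u}\phi^{-1}(u)\,f_\fraka(\zeta_{p^n}^{-u}t),
\]
not $f_\fraka(\zeta_{p^n}^{u}t)$ as in your later display. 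The sign is not a convention you can adjust at the end, because flipping it multiplies the answer by $\phi(-1)$.

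The real gap is in the geometric lemma, which is the only non-formal input, and you state it incorrectly. Evaluating at $t(x_\fraka)=1$, the proposition needs $f(x_\fraka*\bfn(up^{-n}))=f_\fraka(\zeta_{p^n}^{-u})=f\bigl(\zeta_{p^n}^{-u\mathrm{N}(\fraka)^{-1}\sqrt{-D_K}^{-1}}\bigr)$, that is,
\[
t\bigl(x_\fraka*\bfn(up^{-n})\bigr)=\zeta_{p^n}^{-u\,\mathrm{N}(\fraka)^{-1}\sqrt{-D_K}^{-1}} .
\]
So the rescaling built into $f_\fraka$ absorbs the factor only if the exponent is $\mathrm{N}(\fraka)^{-1}\sqrt{-D_K}^{-1}$. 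You claim the inverse factor $\mathrm{N}(\fraka)\sqrt{-D_K}$, and with it your ``equivalently'' is false. Your formula gives $f(x_\fraka*\bfn(up^{-n}))=f_\fraka(\zeta_{p^n}^{-uD_K\mathrm{N}(\fraka)^2})$. After reindexing the sum over $u$, the two sides of the proposition would then differ by the factor $\phi(\pm D_K\mathrm{N}(\fraka)^2)$, which is not $1$ for a general primitive $\phi$.

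The proof is therefore incomplete at its key step. You would have to carry out the computation from the $p$-component of $\bar a^{-1}\xi_c\bfn(up^{-n})$, via the resulting quasi-canonical lift and the normalization of the Weil pairing. It must produce the root of unity with exponent exactly $-u\mathrm{N}(\fraka)^{-1}\sqrt{-D_K}^{-1}$, sign included. Your heuristic that the pairing ``introduces $\mathrm{N}(\fraka)\sqrt{-D_K}$'' points the wrong way.
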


\begin{definition}
Let 
$$\chi : K^\times \backslash \mathbb{A}^\times_K \to \mathbb{C}^\times$$
be a Hecke character.
\begin{enumerate}
\item We say that\textit{ $\chi$ has infinity type $(m,n)$}
if $$\chi_{\infty} (z) = z^m \overline{z}^n .$$
\item We say that \textit{$\chi$ is anticyclotomic} if $\chi(\mathbb{A}^\times_{\mathbb{Q}}) = 1$.
\end{enumerate}
\end{definition}
Let $\chi_{\mathfrak{q}} : K^\times_{\mathfrak{q}} \to \mathbb{C}^\times$ be the $\mathfrak{q}$-component of $\chi$ where $\mathfrak{q}$ is a  prime ideal of $\mathcal{O}_K$.
Let $\mathfrak{a}$ be a fractional ideal prime to $\mathfrak{c}$ and  $a$ be a idele with  $a\widehat{\mathcal{O}}_K \cap K = \mathfrak{a}$
such that $a_{\mathfrak{q}} = 1$ for all $\mathfrak{q}\mid \mathfrak{c}$.
If $\chi$ has conductor $\mathfrak{c}$ then we write $\chi(\mathfrak{a})$ for $\chi(a)$.

The $p$-adic avatar $\widehat{\chi} : K^\times \backslash \widehat{K}^\times  \to \mathbb{C}^\times_p$ of $\chi$ having infinity type $(m,n)$
is given as
$$\widehat{\chi}(z) := \iota_p \circ \iota^{-1}_{\infty} \left( \chi(z) \right) \cdot z^m_{\mathfrak{p}} \cdot z^n_{\overline{\mathfrak{p}}}$$
for $z \in \widehat{K}^\times$.

Let $f\in S^{\mathrm{new}}_{2r}(\Gamma_0(N))$  be an elliptic newform with level $N_f$ dividing $N$ with $q$-expansion
$$f(q)=\sum_{n>0}\mathbf{a}_n(f) q^n.$$
Suppose $F$ is a finite extension of $\Q_p$ containing the field generated by Fourier coefficients $\{\mathbf{a}_n(f)\}_n$ over $\Q$. 
One can show that  there is a unique geometric modular form $f^\flat$ of weight $2r$ and level $\Gamma_0(Np^2)$ defined over $\calO_F$ such that:
\begin{itemize}\item $f^\flat(A_x,\eta_x,2\pi i dw)=\mathbf{f}^\flat(x)$ for $x\in \mathbb{H}\times\GL_2(\hat{\Q})$ and a complex function $\mathbf{f}^\flat: \mathbb{H} \times \GL_2(\hat{\Q}) \rightarrow \C$ as defined in \cite[eq. (3.5)]{castella-hsieh-heegner-cycles}, and 
\item $f^\flat({\mathrm{ Tate}}(q),\eta_{\mathrm{can}},\omega_{\mathrm{can}})=\sum_{p\nmid n}\mathbf{a}_n(f) q^n.$
\end{itemize}
 The associated $p$-adic modular form $\widehat{ f}^\flat\in V_p(N,\calO_F)$ is of weight $2r$.
 Let $c=c_0 p^n$ with $p\nmid c_0$ and $n\geq 0$. Define
\[\Pic\calO_c:=K^\times\backslash\widehat{K}^\times/\widehat{\calO}_c^\times.\]
Let $a\in \widehat{K}^{(c\gp\mathfrak{N})\times}$ and $\fraka=a\widehat{K}\cap\calO_c$, we will write $[a]=[\fraka]$ to denote its class in $\Pic\calO_c$. Let $\chi:K^\times\backslash \mathbb{A}_K^\times/\widehat{\calO}_c^\times\to\C^\times$ be an anticyclotomic Hecke character, and define
\[A(\chi)=\{\textrm{primes $q\mid D_K$ such that $q\mid N_f$ and the character $\chi_q$ is unramified.}\}.\]
In \cite{castella-hsieh-heegner-cycles}, the following hypotheses are assumed:
\begin{itemize}
    \item[\textbf{(Heeg')}]  $N_f^-$ is a square-free product of primes ramified in $K$, and
    \item[\textbf{(ST)}] $\mathbf{a}_q(f)\chi(\mathfrak{q})=-1$ for every $q\in A(\chi)\quad (q\calO_K=\mathfrak{q}^2)$.
\end{itemize}
Since we assume $(D_K, N)= 1$ in \S\ref{subsec:setup}, these hyotheses are automatic.

\subsection{BDP $p$-adic $L$-functions}\label{sec:2.4BDp}
Let $K[p^\infty] = \cup_{n \geq 1}K[p^n]$ be the ring class field of conductor $p^\infty$
and $\widetilde{\Gamma} = \mathrm{Gal}(K[p^\infty] / K)$.
Let $\Gamma$ be the maximal free quotient of $\widetilde{\Gamma}$.
Let $\mathcal{C}( \widetilde{\Gamma} , \mathcal{O}_{\mathbb{C}_p} )$
be the the space of continuous $\mathcal{O}_{\mathbb{C}_p}$-valued functions on $\widetilde{\Gamma}$
and let 
$\mathfrak{X}_{p^\infty} \subseteq \mathcal{C}( \widetilde{\Gamma} , \mathcal{O}_{\mathbb{C}_p} )$ be the
the set of locally algebraic $p$-adic characters $\rho : \widetilde{\Gamma} \to \mathcal{O}^{\times}_{\mathbb{C}_p}$.

Let $\mathrm{rec}_{\mathfrak{p}}$ be the local reciprocity map given by 
$$\mathrm{rec}_{\mathfrak{p}} : \mathbb{Q}^\times_p = K^\times_{\mathfrak{p}} \to \mathrm{Gal}(K^{\mathrm{ab}}/K) \to \widetilde{\Gamma}.$$
 For $\rho\in \mathfrak{X}_{p^\infty}$, define the map $\rho_{\mathfrak{p}}: \Q_p^\times \rightarrow \mathbb{C}_p^\times$ as $\rho_{\mathfrak{p}}(\beta) = \rho(\mathrm{rec}_{\gp}(\beta))$.  For $\rho \in \calC(\tilde{\Gamma}, \calO_{\mathbb{C}_p}),$ define $\rho|[\ga]:\Z_p^\times \rightarrow \calO_{\mathbb{C}_p}$ as $$\rho|[\ga](x)= \rho(\mathrm{rec}_{\mathfrak{p}}(x)\sigma_{\mathfrak{a}}^{-1})=\rho(\mathrm{rec}_\gp(x)\mathrm{rec}_K(a)).$$ 

For every $a\in\widehat{K}^{(c_0 \mathfrak{N} p)\times}$ consider the pair $(\mathfrak{a},a)$ where $\mathfrak{a}$ is the associated fractional ideal in $\calO_{c_0}$. Recall that to a pair $(\mathfrak{a},a)$ we have an associated CM point
 $(A_\fraka,\eta_\fraka)$.
Let $t_\fraka$ be the canonical Serre--Tate coordinate of $\widehat{f}^\flat$ around $\mathbf{x}_\fraka=[(A_\fraka,\eta_\fraka)]\otimes_\calW\bar{\mathbb{F}}_p$, and define
$$\widehat{f}^\flat_{\fraka}(t_\fraka):=\widehat{ f}^\flat(t_\fraka^{\mathrm{N}(\fraka)^{-1}\sqrt{-D_K}^{-1}})\in\calW \llbracket t-1 \rrbracket \quad( \mathrm{where}\text{ }\mathrm{N}(\fraka)=|a|_{\mathbb{A}_K}^{-1}{c_0}^{-1}).$$
We recall the analytic definition of the BDP $p$-adic $L$-function as in \cite[Definition 3.7]{castella-hsieh-heegner-cycles}. 
\begin{definition}\label{D:padicL}
Suppose $\psi$ is an anticyclotomic Hecke character of infinity type $(r,-r)$
 and let $c_0\calO_K$ be the prime-to-$p$ part of the conductor of $\psi$. 
 Define the $p$-adic measure $\mathscr{L}_{\gp,\psi}(f)$ on $\widetilde{\Gamma}$ by
\[\mathscr{L}_{\gp,\psi}(f)(\rho)=\sum_{[\fraka]\in \Pic\calO_{c_0}}\psi(\fraka)\mathrm{N}(\fraka)^{-r}\cdot \left(\widehat{ f}^\flat_{\fraka}\otimes\psi_\gp\rho|[\fraka]\right)(A_\fraka,\eta_\fraka)\]
where $\rho \in \mathfrak{X}_{p^\infty}$.
We will view $\mathscr{L}_{\gp,\psi}(f)$ as an element in the semi-local ring $\calW \llbracket \widetilde{\Gamma} \rrbracket$.
\end{definition}

\subsection{The finite layer analogue of BDP $p$-adic $L$-functions} \label{sec:2.5BDP}
By using the factorization of Gauss sums and the interpolation formula for BDP $p$-adic $L$-functions, we construct an analogue of Bertolini--Darmon's theta elements for the BDP setting, the finite layer version of BDP $p$-adic $L$-functions.
\subsubsection{Factorizing Gauss sums}
Assume that $\hat{\phi}\in \mathfrak{X}_{p^\infty}$ is the $p$-adic avatar of a Hecke character $\phi$ with infinity type $(0,0)$, so $\widehat{\phi} = \phi$ and it has finite order. Assume $c_0 = 1$.
Let $\chi = \psi \cdot \phi$ be an anticyclotomic character of infinity type $(r, -r)$. From Definition \ref{D:padicL} and Proposition \ref{prop:tensor} we obtain the explicit formula
\begin{align*}
\mathscr{L}_{\mathfrak{p}, \psi}(f)(\phi) & = \sum_{[a] \in \mathrm{Pic}(\mathcal{O}_{K[1]})} \left( \widehat{f}^{\flat}_{\mathfrak{a}} \otimes (\chi \circ \mathrm{rec}_{\mathfrak{p}}) \right) (x_{\mathfrak{a}}) \cdot \left(\chi \cdot \vert - \vert^{r}_{\mathbb{A}_K} \right)(a) \\
& = p^{-n} \cdot \mathfrak{g}(\chi \circ \mathrm{rec}_{\mathfrak{p}}) \\
& \ \ \ \ \times \sum_{[a] \in \mathrm{Pic}(\mathcal{O}_{K[1]})} \left(\overline{\chi} \cdot \vert - \vert^{r}_{\mathbb{A}_K} \right)(a) \cdot \sum_{u \in (\mathbb{Z}_p/p^n\mathbb{Z})^\times} \widehat{f}^{\flat}(x_{\mathfrak{a}} * \mathbf{n}(up^{-n})) \cdot \left( \chi \circ \mathrm{rec}_{\mathfrak{p}} \right) (u^{-1})
\end{align*}
where $\overline{\chi}$ is the character on $\mathrm{Pic}(\mathcal{O}_{K[1]})$ induced from $\chi$.
We compute
\begin{align*}
& p^{-n} \cdot \mathfrak{g}(\chi \circ \mathrm{rec}_{\mathfrak{p}}) \cdot \sum_{[a] \in \mathrm{Pic}(\mathcal{O}_{K[1]})} \left(\overline{\chi} \cdot \vert - \vert^{r}_{\mathbb{A}_K} \right)(a) \cdot \sum_{u \in (\mathbb{Z}_p/p^n\mathbb{Z})^\times} \widehat{f}^{\flat}(x_{\mathfrak{a}} * \mathbf{n}(up^{-n})) \cdot \left( \chi \circ \mathrm{rec}_{\mathfrak{p}} \right) (u^{-1}) \\
& = 
p^{-n} \cdot \left( \sum_{u' \in (\mathbb{Z}/p^n\mathbb{Z})^\times} \left(\chi \circ \mathrm{rec}_{\mathfrak{p}} \right) (u') \cdot \zeta^{u'}_{p^n} \right) \\
& \ \ \ \ \times \sum_{[a] \in \mathrm{Pic}(\mathcal{O}_{K[1]})} \left(\overline{\chi} \cdot \vert - \vert^{r}_{\mathbb{A}_K} \right)(a) \cdot \sum_{u \in (\mathbb{Z}_p/p^n\mathbb{Z})^\times} \widehat{f}^{\flat}(x_{\mathfrak{a}} * \mathbf{n}(up^{-n})) \cdot \left( \chi \circ \mathrm{rec}_{\mathfrak{p}} \right) (u^{-1})    \\
& = 
p^{-n} \cdot \sum_{[a] \in \mathrm{Pic}(\mathcal{O}_{K[1]})} \left(\overline{\chi} \cdot \vert - \vert^{r}_{\mathbb{A}_K} \right)(a) \\
& \ \ \ \
\times  \sum_{u' \in (\mathbb{Z}/p^n\mathbb{Z})^\times} 
\sum_{u \in (\mathbb{Z}_p/p^n\mathbb{Z})^\times} 
\zeta^{u'}_{p^n} \cdot \widehat{f}^{\flat}(x_{\mathfrak{a}} * \mathbf{n}(up^{-n})) \cdot
\left( \chi \circ \mathrm{rec}_{\mathfrak{p}} \right) (u^{-1} \cdot u') \\
& = 
p^{-n} \cdot \sum_{[a] \in \mathrm{Pic}(\mathcal{O}_{K[1]})} \left(\overline{\chi} \cdot \vert - \vert^{r}_{\mathbb{A}_K} \right)(a) \\
& \ \ \ \ \times  \sum_{c \in (\mathbb{Z}/p^n\mathbb{Z})^\times} 
\sum_{u \in (\mathbb{Z}_p/p^n\mathbb{Z})^\times} 
\zeta^{uc}_{p^n} \cdot \widehat{f}^{\flat}(x_{\mathfrak{a}} * \mathbf{n}(up^{-n})) \cdot
\left( \chi \circ \mathrm{rec}_{\mathfrak{p}} \right) (c) 
\end{align*}
where we put $u' = u\cdot c \in (\mathbb{Z}/p^n\mathbb{Z})^\times$.
\subsubsection{BDP theta elements}\label{def_theta}
From the above computation, it is natural to define the theta element $\theta_n(f)$ by
$$\theta_n(f) := p^{-n}  \cdot   \sum_{c \in (\mathbb{Z}/p^n\mathbb{Z})^\times} \left( \sum_{u \in (\mathbb{Z}_p/p^n\mathbb{Z})^\times} \zeta^{uc}_{p^n} \cdot \sum_{[a] \in \mathrm{Pic}(\mathcal{O}_{K[1]})} \vert a \vert^{r}_{\mathbb{A}_K}   \cdot \widehat{f}^{\flat}(x_{\mathfrak{a}} * \mathbf{n}(up^{-n})) \right) \cdot \widetilde{a} \cdot  c     $$
where $\widetilde{a}$ runs over a lifting of $\mathrm{Pic}(\mathcal{O}_{K[1]})$ to $\widetilde{\Gamma}_n$.
Due to the choice of lifting, $\theta_n(f) $ is well-defined only up to multiplication by an element of $\widetilde{\Gamma}_n$.
See \cite[\S2.3.5]{kim-summary} for a similar phenomenon for Bertolini--Darmon's theta elements.
Since $\phi$ is a finite order character and $\mathscr{L}_{\mathfrak{p}, \psi}(f)$ is integral,
$\theta_n(f)$ also lies in $\mathcal{W}[\widetilde{\Gamma}_n] \subseteq \mathbb{C}_p[\widetilde{\Gamma}_n]$.
To sum up, we have the slightly twisted comparison between $\mathscr{L}_{\mathfrak{p}, \psi}(f)$ and  $\theta_n(f)$ as
$$\mathscr{L}_{\mathfrak{p}, \psi}(f)(\phi) = \phi \left( \mathscr{L}_{\mathfrak{p}, \psi}(f) \right)
= (\psi \cdot \phi) ( \theta_n(f) ) = \chi( \theta_n(f) ) .$$
Let $\Lambda^{\psi^{-1}}$ be the $\psi^{-1}$-twisted Iwasawa algebra defined by isomorphism
$\Lambda^{\psi^{-1}} \simeq \Lambda$ sending $\sigma$ to $\psi(\sigma) \cdot \sigma$ for a group-like element $\sigma$.
The same notational rule applies to finite layer group rings.
The above twisted comparison shows that if $\mathscr{L}_{\mathfrak{p}, \psi}(f)$ lies in $\Lambda^{\psi^{-1}}$ then $\theta_n(f)$ is the image of $\mathscr{L}_{\mathfrak{p}, \psi}(f)$ under $\Lambda^{\psi^{-1}} \simeq \Lambda \to \Lambda_n$.

Denote by $\iota$ the involution on $\mathcal{W}[\widetilde{\Gamma}_n]$ defined by sending a group-like element to its inverse. Then $\theta_n(f) \cdot \iota( \theta_n(f) )$ is independent of the choice of lifting.

It is not difficult to see that $\mathscr{L}_{\mathfrak{p}, \psi}(f) \cdot \mathscr{L}_{\mathfrak{p}, \psi}(f)$ and $\mathscr{L}_{\mathfrak{p}, \psi}(f) \cdot \iota (\mathscr{L}_{\mathfrak{p}, \psi}(f))$ differ only up to multiplication by an element of $\widetilde{\Gamma}$ by the functional equation for generalized Heegner cycles \cite[Lem. 4.6, Prop. 7.4]{castella-hsieh-heegner-cycles} and the explicit reciprocity law for generalized Heegner cycles \cite[Thm. 5.7]{castella-hsieh-heegner-cycles}.

\section{BDP Selmer groups and the strong main conjectures}\label{sec:BDPselmer}
From this section onward, we will restrict to the case of elliptic curves over $\Q$. Let $f\in S_2(\Gamma_0(N))$ be the newform attached to the elliptic curve $E$ over $\Q$. Let $\psi$ be an anticyclotomic Hecke character of infinity type $(1,-1)$. As in \cite[p. 540, line -2]{CGLS2022}, for every character $\xi$ of $\Gamma$, define $\mathcal{L}_E\in \Lambda^\mathrm{ur}$ as $$\mathcal{L}_E(\xi)=\mathscr{L}_{\gp,\psi}(\psi^{-1}\xi)^2.$$ 
This identification reflects the $\psi^{-1}$-twist in $\S$\ref{def_theta}.
\subsection{BDP Selmer groups}\label{sec:BDP Selmer}

Let $K_\infty$ be the anticyclotomic $\Z_p$-extension of $K$ with subextension $K_n$ of degree $p^n$ over $K$ and let $\Lambda$ be the Iwasawa algebra $\Z_p \llbracket \Gal(K_\infty/K) \rrbracket$. Set $\Lambda^\mathrm{ur}:=\Lambda \hat{\otimes}_{\Z_p} \Z_p^{\mathrm{ur}}$ where $\Z_p^{\mathrm{ur}}$ is the completion of the ring of integers of the maximal unramified extension of $\Q_p$. Assume that the prime $p$ splits in $\mathcal{O}_K$ as $p\mathcal{O}_K=\gp \cdot \gp^c$ and since $(p, h_K)=1$, the primes $\gp$ and $\gp^c$ are totally ramified in $K_\infty$.

Let $T$ be the $p$-adic Tate module of $E$ and set $\bA:=T\hat{\otimes}_{\Z_p}\Hom_{\Z_p}(\Lambda, \Q_p/\Z_p)$ equipped with the natural  $G_K$-action, which means that $G_K$ acts on $\Lambda$ by the tautological character. One defines the \textbf{BDP-Selmer group} as
\begin{equation}\label{def1}
    \Sel^{\emptyset,0}(K,\bA):=\ker\Big(H^1(K,\bA) \rightarrow \prod_{\gq \in \{\gp, \gp^c\}}\frac{H^1(K_\gq,\bA)}{H^1_{\mathcal{L}_\gq}(K_\gq,\bA)}\times \prod_{\gq \nmid p, \gq \mid N}H^1(K_\gq,\bA)\Big),
\end{equation}

where $H^1_{\mathcal{L}_\gp}(K_\gq,\bA)=H^1(K_\gp, \bA)$ and $H^1_{\mathcal{L}_{\gp^c}}(K_\gq,\bA)=0$.

Let $V=T[\frac{1}{p}]$ and $A=V/T \cong E[p^\infty]$. By Shapiro's lemma, one obtains that $\Sel^{\emptyset,0}(K,\bA) \cong \Sel^{\emptyset,0}(K_\infty,A)$ where the definition of the later Selmer group is the same as definition \eqref{def1} with $(K,\bf{A})$ replaced by $(K_\infty,A)$. Let $X^{\emptyset,0}$ be the Pontryagin dual of $\Sel^{\emptyset,0}(K_\infty,A)$. 

 \begin{remark}
In the definition of $\Sel^{\emptyset,0}(K,\bA)$ above, we have taken the trivial local conditions for primes away from $p$.
However, the standard BDP Selmer group (see example \cite{KobOta}) is defined using the unramified local conditions at primes away from $p$. The possible difference between these two local conditions only comes from primes dividing $N^-$ (see \cite[p. 1363, line 11]{Pollack_Weston_2011}). By \cite[Definition 3.3, Lemma 3.4]{Pollack_Weston_2011}, the hypothesis (\ref{Tam}) for primes dividing $N^-$ ensures that this difference is trivial and hence our definition of the BDP Selmer group coincides with the standard one.
\end{remark}

 In the following, we record the current state of art on the Greenberg--Iwasawa main conjecture. Let $E/\Q$ be an elliptic curve of conductor $N$. Using the imaginary quadratic field $K$, we decompose
$$N = N^+ \cdot N^-$$
where a prime divisor of $N^-$ is inert in $K/\mathbb{Q}$ and a prime divisor of $N^+$ splits  or is ramified  in $K/\mathbb{Q}$.

\begin{remark}
    This choice of $N^+$ and $N^-$ does not match with \cite[p. 575, line -1]{castella-hsieh-heegner-cycles} but matches with \cite{castella-wan-perrin-riou-ss} and hence we will assume henceforth that $(N, pD_K)=1$ so that it agrees with \cite[p. 575]{castella-hsieh-heegner-cycles}. 
\end{remark}
 \subsubsection{Ordinary case}\label{sec:ord}
 We make the following assumptions in the ordinary case.
 \begin{enumerate}
     \item $E[p]$ is irreducible as a $G_\Q$-module,
     \item the discriminant $D_K<0$ is odd,  and $D_K \neq -3$.
     \item every prime $\ell \mid N$ splits in $K$,
     \item $p$ splits in $K$ and $p >3$,
     \item $\bar{\rho}_E: G_\Q \rightarrow \mathrm{Aut}_{\mathbb{F}_p}(E[p])$ is surjective.
 \end{enumerate}
The following theorem is \cite[Theorem 1.2.4]{burungale-castella-skinner-gl2}.
\begin{theorem}\label{thm:main-conj-ord}
    Let $E/\Q$ has good ordinary reduction at $p$. Under the hypotheses (1)-(5) above, $X^{\emptyset,0}$ is $\Lambda$-torsion and $$\mathrm{Char}_{\Lambda}(X^{\emptyset,0})\Lambda^{\mathrm{ur}}=(\mathcal{L}_E)$$ as ideals in $\Lambda^{\mathrm{ur}}.$
\end{theorem}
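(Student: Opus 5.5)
The statement is exactly \cite[Theorem 1.2.4]{burungale-castella-skinner-gl2}, so in the paper we would simply invoke it. If we had to reconstruct the argument, the plan would be to prove two opposite divisibilities. We would not attack $X^{\emptyset,0}$ directly. Instead we would pass through the Heegner point main conjecture of Perrin-Riou and Howard and transfer the result to the BDP side by an equivalence of main conjectures.

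The first step is to build the Heegner point Kolyvagin system. Let $\kappa_\infty \in \varprojlim_n H^1_f(K_n,T)$ be the $\Lambda$-adic class of Heegner points of conductor $p^n$, made compatible by the $U_p$-stabilization with the unit root $\alpha$. We take the associated Kolyvagin system for the ordinary Selmer structure $\mathcal{F}_{\mathrm{ord}}$, which is available thanks to hypotheses (1), (3), (4) and (5). Howard's theory of $\Lambda$-adic Kolyvagin systems, with the nonvanishing of $\kappa_\infty$ supplied by Cornut--Vatsal, then gives the following. The group $X_{\mathrm{ord}}$ has $\Lambda$-rank one, and
\[
\mathrm{Char}_\Lambda(X_{\mathrm{ord},\mathrm{tors}}) \supseteq \mathrm{Char}_\Lambda\big(H^1_{\mathcal{F}_{\mathrm{ord}}}(K,\mathbf{T})/\Lambda\kappa_\infty\big)^2 .
\]

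The second step transfers this to the BDP Selmer group by global duality. We compare four Selmer structures at $\gp$ and $\gp^c$: $(\mathrm{ord},\mathrm{ord})$, $(\emptyset,0)$, $(\emptyset,\mathrm{ord})$ and $(\mathrm{ord},0)$. Poitou--Tate exact sequences relate them. The explicit reciprocity law of Castella--Hsieh (\cite[Thm. 5.7]{castella-hsieh-heegner-cycles}) identifies the image of $\mathrm{loc}_{\gp}(\kappa_\infty)$ under the Coleman/Perrin-Riou big logarithm with $\mathscr{L}_{\gp,\psi}(f)$, up to a unit and the $\psi^{-1}$-twist. Hypothesis $(\mathrm{H^0})$ and (4) ensure that this logarithm is injective with torsion-free cokernel, so that no error terms appear at $\gp$.

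Combining these, we obtain in $\Lambda^{\mathrm{ur}}$ the equality
\[
\mathrm{Char}_\Lambda(X^{\emptyset,0})\,\Lambda^{\mathrm{ur}}\cdot \mathrm{Char}\big(H^1_{\mathcal{F}_{\mathrm{ord}}}/\Lambda\kappa_\infty\big)^{2}
= (\mathcal{L}_E)\cdot \mathrm{Char}(X_{\mathrm{ord},\mathrm{tors}}).
\]
Hence the divisibility from the first step yields one divisibility, $\mathrm{Char}_\Lambda(X^{\emptyset,0})\Lambda^{\mathrm{ur}} \supseteq (\mathcal{L}_E)$. It also shows that $X^{\emptyset,0}$ is torsion, because $\mathcal{L}_E \neq 0$ by the interpolation formula together with nonvanishing of twisted central $L$-values.

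The third step, and the main obstacle, is the opposite divisibility $\mathrm{Char}_\Lambda(X^{\emptyset,0})\Lambda^{\mathrm{ur}} \subseteq (\mathcal{L}_E)$. This cannot come from Euler systems. We would first try to use Wan's Eisenstein congruence method on $\mathrm{U}(3,1)$. That method proves a three-variable Greenberg-type main conjecture for $f \otimes$ (Hecke characters of $K$), whose $p$-adic $L$-function specializes along the anticyclotomic line to $\mathcal{L}_E$ via the factorization of Rankin--Selberg $p$-adic $L$-functions. The divisibility then descends along the anticyclotomic specialization using a control theorem, which needs (Tam) and $(\mathrm{H^0})$ to rule out pseudo-null or finite error terms.

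The delicate points in this step are twofold. We must check that the integral period normalizations on the two sides agree up to a unit. We must also verify the hypotheses of Wan's theorem, namely the residual irreducibility (1) and the splitting conditions (3)--(4) on the auxiliary primes. Together with the second step, this yields the asserted equality of ideals in $\Lambda^{\mathrm{ur}}$.
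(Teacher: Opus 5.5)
Your proposal matches the paper, which gives no independent argument and simply invokes \cite[Theorem 1.2.4]{burungale-castella-skinner-gl2}. Your optional reconstruction (Howard's Heegner point Kolyvagin system, the explicit reciprocity law, and Eisenstein congruences on $\mathrm{U}(3,1)$) follows the standard route. As written, though, it leans on $(\mathrm{H^0})$ and (Tam), which are not among hypotheses (1)--(5), so by itself it would not prove the statement exactly as given.
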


\begin{remark}
    If $E[p]$ is reducible, then  we still have $\mathrm{Char}_{\Lambda}(X^{\emptyset,0})\Lambda^{\mathrm{ur}}=(\mathcal{L}_E)$ under certain hypotheses (cf. \cite[Theorem C]{CGLS2022}). 
\end{remark}

\subsubsection{The supersingular case}\label{sec:supersingular}
When $E/\Q$ has supersingular reduction at $p$, we assume the following conditions:
\begin{enumerate}
\item $N^-$ is the square-free product of an even number of primes.
\item $p$ splits in $K/\mathbb{Q}$ and write $(p) = \mathfrak{p} \overline{\mathfrak{p}}$ in $\mathcal{O}_K$
and $\mathfrak{p}$ is compatible with the choice of $\overline{\mathbb{Q}}_p$.
\item $N$ is square-free.
\item There exists a prime $\ell$ dividing $N$ such that $\ell$ is non-split in $K/\mathbb{Q}$.
\item If $N$ is odd, then 2 splits in $K/\mathbb{Q}$.
\item $\overline{\rho}_E$ is ramified at every prime dividing $N^-$.
\end{enumerate}
\begin{theorem}\label{thm:main-conj-ss}
   Let $E$ has good supersingular reduction at $p$ with $a_p=p+1-\#\tilde{E}(\mathbb{F}_p)=0.$ Under the hypotheses (1)-(6) above, $X^{\emptyset,0}$ is $\Lambda$-torsion and $$\mathrm{Char}_{\Lambda}(X^{\emptyset,0})\Lambda^{\mathrm{ur}}=(\mathcal{L}_E)$$ as ideals in $\Lambda^{\mathrm{ur}}.$
\end{theorem}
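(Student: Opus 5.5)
The statement is the BDP main conjecture in the supersingular case, and I would not reprove it. Instead I would deduce it from the main theorem of \cite{castella-wan-perrin-riou-ss}, as Theorem \ref{thm:main-conj-ord} was deduced from \cite{burungale-castella-skinner-gl2} in the ordinary case. That deduction has three steps.

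\emph{Step 1: match the hypotheses.} I would check that hypotheses (1)--(6) of \S\ref{sec:supersingular}, together with $a_p(E)=0$, $p\nmid h_K$, $(N,pD_K)=1$ and $-D_K<-3$ odd, are exactly the running assumptions of \cite{castella-wan-perrin-riou-ss}. Their decomposition $N=N^+N^-$ is the one used here, as noted in the Remark above. The two parts of their argument use these hypotheses as follows.
\begin{itemize}
\item The divisibility $\mathcal{L}_E\in \mathrm{Char}_\Lambda(X^{\emptyset,0})\Lambda^{\mathrm{ur}}$ comes from a Heegner point (or signed Beilinson--Flach) Euler system argument.
\item The opposite divisibility comes from Wan's Eisenstein congruences on $\mathrm{U}(3,1)$. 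That argument needs $N$ square-free, a prime $\ell\mid N$ non-split in $K$, the parity of $N^-$, the condition at $2$, and the ramification of $\overline{\rho}_E$ at primes dividing $N^-$. These are hypotheses (1), (3)--(6).
\end{itemize}

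\emph{Step 2: match the Selmer groups.} I would identify our $X^{\emptyset,0}$ with the dual of the BDP Selmer group used in \cite{castella-wan-perrin-riou-ss}. At primes away from $p$, our definition \eqref{def1} imposes trivial local conditions, whereas the standard definition uses unramified ones. The Remark following \eqref{def1} shows, via \cite[Lemma 3.4]{Pollack_Weston_2011} and (Tam), that the two agree. At $\mathfrak{p}$ and $\mathfrak{p}^c$ the conditions (relaxed, respectively strict) are the same as theirs. By Shapiro's lemma, passing between $K_\infty$ and $\bA$ is harmless. Torsionness of $X^{\emptyset,0}$ follows from the nonvanishing of $\mathcal{L}_E$ together with the Euler system bound.

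\emph{Step 3: match the $p$-adic $L$-functions.} I would identify $\mathcal{L}_E$ with the square of the BDP $p$-adic $L$-function appearing in \cite{castella-wan-perrin-riou-ss}. This uses the normalization $\mathcal{L}_E(\xi)=\mathscr{L}_{\mathfrak{p},\psi}(\psi^{-1}\xi)^2$ of \cite{CGLS2022}, and amounts to checking that the two constructions agree up to a unit of $\Lambda^{\mathrm{ur}}$. Possible discrepancies are the choice of $\psi$, a CM period, and a sign. Since both sides are ideals of $\Lambda^{\mathrm{ur}}$, such unit factors do not affect the equality.

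I expect Step 3, together with the compatibility of conventions in Step 1, to be the main obstacle. The deduction itself is formal; the substance is that the chosen twist by $\psi$ and the integral normalization of $\widehat f^\flat$ must give exactly the integral ideal appearing in \cite{castella-wan-perrin-riou-ss}, with no stray powers of $p$ coming from periods. I would first compare the interpolation formulas at a single finite-order character $\phi$ using Proposition \ref{prop:tensor}. I would then invoke Zariski density of such characters to conclude that the two measures coincide up to a unit.
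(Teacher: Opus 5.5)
Your proposal takes the paper's route. The paper's proof is just the deduction from Theorems 5.3, 6.8 and A.5 of \cite{castella-wan-perrin-riou-ss}, and your Steps 1--3 spell out the compatibility checks it leaves implicit.

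There are two cautions on those checks. First, (Tam), which you use in Step 2, is not a hypothesis of this statement. At the inert primes $\ell\mid N^-$, the only place it is needed, it follows from (3) and (6): for such $\ell$ one has $c_{\ell\mathcal{O}_K}(E)=\mathrm{ord}_\ell(\Delta_E)$ with $\Delta_E$ the minimal discriminant, and $\overline{\rho}_E$ is ramified at $\ell\,\|\,N$ exactly when $p\nmid\mathrm{ord}_\ell(\Delta_E)$. Second, in Step 3 neither Proposition \ref{prop:tensor} nor the modular-curve construction of \S\ref{sec:2.4BDp} is available. Hypothesis (4) together with $(N,D_K)=1$ forces an inert prime to divide $N$, so $N\mathcal{O}_K$ is not of the form $\mathfrak{N}\bar{\mathfrak{N}}$. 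The comparison must therefore be made with the BDP $p$-adic $L$-function built from CM points on the Shimura curve of the indefinite quaternion algebra of discriminant $N^-$.
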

\begin{proof}
   This follows by combining Theorem 5.3, Theorem 6.8 and Theorem A.5 of \cite{castella-wan-perrin-riou-ss}.
\end{proof}

\section{The proof of the strong main conjecture}
\subsection{More ingredients}\label{quick}
 
For each prime number $v$ of $K$, let $c_v(E):=\big|E(K_v)/E_0(K_v)\big|$ be the Tamagawa number of $E$ at $v$. The  assumption we impose is
\begin{equation}\label{Tam}
\tag{Tam} \text{$p\nmid c_v(E)$ for all primes $v\,|\,N$.}
\end{equation}
We also assume the following condition which is also an assumption in \cite[p.154]{LMX}.
\begin{equation}\label{H^0}
\tag{$\mathrm{H^0}$} \text{$H^0(K_v,A)=0$ for all primes $v\,|\,p$.}
\end{equation}
We obtain the following control theorem.
\begin{theorem} \label{thm:control-theorem}
    Under \eqref{Tam} and \eqref{H^0}, one has that the natural homomorphism 
    $$\Sel^{\emptyset,0}(K_n,A) \rightarrow \Sel^{\emptyset,0}(K_\infty,A)^{\Gamma_n}$$ is an isomorphism.
\end{theorem}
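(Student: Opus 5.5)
The plan is the standard diagram-chase control argument. We compare the defining sequences at level $K_n$ and at $K_\infty$ (taking $\Gamma_n$-invariants). Write $\Sigma$ for the set of primes of $K$ dividing $Np$. We work with $H^1(K_\Sigma/K_n,A)$, which is legitimate since all local conditions are imposed only at primes in $\Sigma$. This gives a commutative diagram with exact rows
\[
0\to \Sel^{\emptyset,0}(K_n,A)\to H^1(K_\Sigma/K_n,A)\to \mathcal{P}_n,
\qquad
0\to \Sel^{\emptyset,0}(K_\infty,A)^{\Gamma_n}\to H^1(K_\Sigma/K_\infty,A)^{\Gamma_n}\to \mathcal{P}_\infty^{\Gamma_n}.
\]
Here $\mathcal{P}_n=\bigoplus_{w\mid \gp^c}H^1(K_{n,w},A)\oplus\bigoplus_{w\mid N}H^1(K_{n,w},A)$. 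Only these primes appear, because the condition at $\gp$ is empty (relaxed) and the condition at $\gp^c$ is strict. By the snake lemma, it suffices to show two things: the middle vertical map $h_n$ is an isomorphism, and the local map $g_n\colon \mathcal{P}_n\to\mathcal{P}_\infty^{\Gamma_n}$ is injective. Injectivity of $g_n$ gives surjectivity of the Selmer map, and $\ker h_n=0$ gives its injectivity.

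For $h_n$, we use inflation–restriction. Its kernel is $H^1(\Gamma_n, A^{G_{K_\infty}})$ and its cokernel injects into $H^2(\Gamma_n,A^{G_{K_\infty}})$, which vanishes because $\Gamma_n\cong\Z_p$ has cohomological dimension one. The claim is that $A^{G_{K_\infty}}=0$. Since $\Gal(K_\infty/K)$ is pro-$p$, if $A^{G_{K_\infty}}\neq0$ then its $\Gamma$-invariants would be nonzero, giving $H^0(K,A)\neq 0$. That contradicts \eqref{H^0}, because $H^0(K,A)\subseteq H^0(K_v,A)=0$ for $v\mid p$. Alternatively, apply the same pro-$p$ argument directly over the local field.

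For $g_n$, we treat each prime separately; the kernel at a prime $w$ of $K_n$ is $H^1(\Gal(K_{\infty,w'}/K_{n,w}),A(K_{\infty,w'}))$.
\begin{itemize}
\item For $w\mid \gp^c$: the extension $K_{\infty,w'}/K_{n,w}$ is a $\Z_p$-extension (totally ramified, as noted in \S\ref{sec:BDP Selmer}). So $A(K_{\infty,w'})=0$ by the same pro-$p$ argument applied to \eqref{H^0} at $\gp^c$, and the kernel vanishes.
\item For $w\mid N$: a prime $\ell\nmid p$ is unramified in $K_\infty$, and it is either finitely decomposed or, for primes inert in $K$ (those dividing $N^-$), totally split in $K_\infty$. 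In the split case there is no kernel. Otherwise $\Gal(K_{\infty,w'}/K_{n,w})\cong\Z_p$ is unramified, so the kernel is $A(K_{\infty,w'})_{\Gamma}$-coinvariant-type, of order bounded by $|H^0(K_{n,w},A)/\mathrm{div}|$. Its order equals the $p$-part of the Tamagawa number $c_w(E)$, as in Greenberg's computation, or \cite[Lemma 3.4]{Pollack_Weston_2011}. Since $\ell$ is unramified in $K_n$ and the reduction is multiplicative or additive, $c_w$ over $K_{n,w}$ has the same $p$-part as over $K_v$ (for split multiplicative reduction, the component group of the unramified extension of degree $p^k$ is unchanged). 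Hence \eqref{Tam} kills it.
\end{itemize}

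The main obstacle is the $N$-primes: making rigorous that $H^1(\Gal(K_{\infty,w'}/K_{n,w}),A(K_{\infty,w'}))$ vanishes using only \eqref{Tam}. Here we would invoke the standard identification $H^1_{\mathrm{ur}}(K_{n,w},A)\cong A(K_{n,w}^{\mathrm{ur}})_{\mathrm{Frob}}$, whose order is $c_w(E)_p$.
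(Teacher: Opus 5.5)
Your proposal is correct and is essentially the standard diagram chase that the paper invokes by citation. Vanishing of $A(K_\infty)$ makes the global restriction map an isomorphism, and vanishing of $A(K_{\infty,\gp^c})$ makes the $\gp^c$-local map injective. Then \eqref{Tam} kills the prime-to-$p$ local kernels: trivially at the primes of $N^-$, which split completely in $K_\infty$, and via $H^1_{\mathrm{ur}}(K_{n,w},A)$, of order $c_w(E)_p$, at the finitely decomposed primes.

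The only difference is that you obtain both vanishing statements uniformly from \eqref{H^0} by the pro-$p$ fixed-point argument, whereas the paper cites \cite{LLMAdvances} in the supersingular case. The same fixed-point argument, applied to the $p$-part of the component group under $\mathrm{Frob}_v^{p^k}$, is also the cleanest way to see that $p\nmid c_v(E)$ persists over $K_{n,w}$.
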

\begin{proof}
    Suppose $E$ has supersingular reduction at $p$. Then we note that $A(K_\infty)$ and $A(K_{\infty,\gp^c})$ are both trivial from \cite[proof of Theorem 4.1]{LLMAdvances} and hence $A(K_\infty)$ and $A(K_{\infty,\gp^c}){}$ are also trivial. If $E$ has good ordinary reduction at $p$, then hypothesis \eqref{H^0} ensures that these groups are trivial. Therefore, the proof follows with the arguments exactly as in \textit{(loc.cit)}. We note that the failure of the surjectivity comes from the prime-to-$p$ local conditions and and this coincides with the ordinary case. Hypothesis \textbf{(Tam)} ensures that this failure vanishes (cf.  \cite[Proof of Theorem 3.9]{kim-kurihara}).
\end{proof}

\begin{theorem} \label{thm:non-existence-finite}
    The $\Lambda$-submodule $\Sel^{\emptyset,0}(K_\infty,A)^\vee$ is $\Lambda$-torsion and does not contain any non-trivial finite submodule.
\end{theorem}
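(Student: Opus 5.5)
Torsionness comes for free: by Theorem \ref{thm:main-conj-ord} in the ordinary case and Theorem \ref{thm:main-conj-ss} in the supersingular case, $X^{\emptyset,0}$ is $\Lambda$-torsion, because its characteristic ideal is generated by the non-zero element $\mathcal{L}_E$. What needs work is the absence of non-trivial finite $\Lambda$-submodules. For this I would use Greenberg's criterion in the form of Pollack--Weston and Kim--Kurihara. If the Selmer group is cut out by a \emph{surjective} global-to-local map, and $H^0(K_\infty,A)$ (or the relevant $H^0$ of the dual) vanishes, then the dual has no non-zero finite submodule.

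The plan has three steps. First, I show that the map defining $\Sel^{\emptyset,0}(K,\bA)$,
$$H^1(K^{\Sigma}/K,\bA)\longrightarrow \frac{H^1(K_{\gp^c},\bA)}{0}\times\prod_{\gq\mid N}H^1(K_\gq,\bA),$$
is surjective, where $\Sigma$ is the set of primes dividing $Np$; there is no condition at $\gp$. Second, I deduce that $H^1(K^\Sigma/K,\bA)^\vee$ has no non-zero pseudo-null (equivalently, finite, since $\Lambda$ has Krull dimension $2$) submodule. This holds because $H^2(K^\Sigma/K,\bA)=0$ by weak Leopoldt, which is known for the anticyclotomic tower once the BDP Selmer group is cotorsion, by a Poitou--Tate comparison. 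It also uses that $H^0(K_\infty,A)=0$, which follows from $E[p]$ irreducible (hypothesis (1) in the ordinary case; surjectivity of $\overline{\rho}_E$ in the supersingular case) or from \eqref{H^0}. Third, I use the resulting exact sequence
$$0\to \Sel^{\emptyset,0}(K,\bA)\to H^1(K^\Sigma/K,\bA)\to \mathcal{P}\to 0$$
together with the local duals. The dual of $H^1(K_{\gp^c},\bA)$ is $H^1(K_{\gp^c},T\otimes\Lambda)$, which is $\Lambda$-free of rank $2$ because $H^0(K_{\infty,\gp^c},A)=0$ by \eqref{H^0}, as noted in the proof of Theorem \ref{thm:control-theorem}. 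The local terms at $\gq\mid N$ have duals $H^1(K_\gq,T\otimes\Lambda)$, which are torsion without finite submodules; I would handle them via \eqref{Tam} exactly as in \cite[Lemma 3.4]{Pollack_Weston_2011}. A standard depth argument (Greenberg's Proposition 4.1.1 type lemma) then shows that $X^{\emptyset,0}$ has no non-zero finite submodule.

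The surjectivity in the first step is the main obstacle. I would prove it by Poitou--Tate: the cokernel is dual to a subgroup of the compactified Selmer group $\varprojlim_n H^1$ of $T$ with the \emph{dual} local conditions, namely $0$ at $\gp$, everything at $\gp^c$, and unramified-type conditions at $\gq\mid N$. Since $T$ is self-dual, that dual Selmer group is the analogous $\Sel^{0,\emptyset}$ for $T\otimes\Lambda$. Its $\Lambda$-torsion-freeness together with cotorsionness of $\Sel^{\emptyset,0}$ (exchanged by complex conjugation, which swaps $\gp$ and $\gp^c$) forces it to be zero. Concretely, its $\Lambda$-rank equals the corank of the $\gp\leftrightarrow\gp^c$ swapped discrete Selmer group, which is $0$, and it is torsion-free since $H^0(K_\infty,A)=0$. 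This argument is uniform in the reduction type; in the supersingular case only the local vanishing $H^0(K_{\infty,\gp^c},A)=0$ is needed, and it comes from \cite{LLMAdvances}.
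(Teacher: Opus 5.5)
\textbf{Verdict: there is a genuine gap in Step 3.} Your torsion argument is fine: Theorems \ref{thm:main-conj-ord} and \ref{thm:main-conj-ss} state torsionness outright. The paper instead cites Kobayashi--Ota, but either source works. Steps 1 and 2 (surjectivity via Poitou--Tate, and weak Leopoldt) are exactly the inputs behind Greenberg's \cite[Prop.~4.14]{greenberg-lnm}, which is what the paper invokes.

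\textbf{Why the depth argument fails.} Dualizing gives $0\to\mathcal{P}^\vee\to H^1(K^\Sigma/K,\bA)^\vee\to X^{\emptyset,0}\to 0$. The depth lemma then only yields
\[
\mathrm{depth}\,X^{\emptyset,0}\geq\min\{\mathrm{depth}\,H^1(K^\Sigma/K,\bA)^\vee,\ \mathrm{depth}\,\mathcal{P}^\vee-1\}.
\]
To conclude, you need $\mathrm{depth}\,\mathcal{P}^\vee\geq 2$, i.e.\ $\mathcal{P}^\vee$ free over the two-dimensional regular ring $\Lambda$. The summand at $\gp^c$ is free, but any nonzero torsion summand has depth at most $1$. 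So ``torsion without finite submodules'' at $\gq\mid N$ is not enough. The sequence $0\to\Lambda/p\xrightarrow{\gamma-1}\Lambda/p\to\mathbb{F}_p\to 0$ shows a torsion kernel without finite submodules producing a finite cokernel.

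\textbf{These local terms do not vanish.} Take $\gq\mid\ell$ with $\ell$ split. Then $K_{\infty,\gq}$ is the unramified $\Z_p$-extension of $K_\gq$. If $E$ has split multiplicative reduction at $\ell\equiv 1\pmod p$, one computes $H^1(K_{\infty,\gq},A)\cong(\Q_p/\Z_p)(-1)^{\mathrm{Gal}(K_\gq^{\mathrm{ur}}/K_{\infty,\gq})}\cong\Q_p/\Z_p$. This holds whatever $\mathrm{ord}_\ell(\Delta_E)$ is, so (Tam) does not help. Inert $\ell\mid N^-$ with $\ell^2\equiv1\pmod p$ behaves similarly. \cite[Lemma~3.4]{Pollack_Weston_2011} compares the unramified and full local conditions at $N^-$; it does not kill these terms.

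\textbf{What your argument actually proves.} It shows that the \emph{imprimitive} Selmer group, relaxed at all $\gq\mid N$, has no nonzero finite submodule. There $\mathcal{P}^\vee=H^1(K_{\gp^c},T\otimes\Lambda)$ is free of rank $2$. But passing from imprimitive to primitive is the hard direction; only primitive $\Rightarrow$ imprimitive is formal.

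\textbf{The missing idea.} For the primitive group you need Greenberg's twisting argument, adapted to the BDP case in \cite[Lemma~3.4]{LMX}.
\begin{enumerate}
\item Twist by a character $\kappa$ of $\Gamma$ so that $\Sel_\kappa:=\Sel^{\emptyset,0}(K_\infty,A)\otimes\kappa$ has finite $\Gamma$-invariants.
\item Show $H^1(\Gamma,\Sel_\kappa)=(\Sel_\kappa)_\Gamma=0$. This uses your Step 1 surjectivity applied to the twisted module, the identification $H^1(\Gamma,H^1(K_\Sigma/K_\infty,A_\kappa))\cong H^2(K_\Sigma/K,A_\kappa)$ coming from your Step 2, and a Poitou--Tate order count at the base level.
\item Conclude: $(\Sel_\kappa)_\Gamma=0$ means $X^{\emptyset,0}[\gamma-u]=0$ for the appropriate $u\in 1+p\Z_p$. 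Any nonzero finite $Z\subset X^{\emptyset,0}$ would have $Z[\gamma-u]\neq 0$, because $\gamma-u$ lies in the maximal ideal and so acts nilpotently on $Z$.
\end{enumerate}

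\textbf{Minor points.} Surjectivity of $\overline{\rho}_E$ is not among the supersingular hypotheses. You do not need it: $H^0(K_\infty,A)=0$ already follows from (H$^0$), since $\Gamma$ is pro-$p$. Also, the dual of the strict condition at $\gq\mid N$ is the relaxed condition, not the unramified one. This does not affect your rank computation, because those local Iwasawa cohomology groups are torsion.
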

\begin{proof}
The torsion-ness result follows from 
\cite[Theorem 1.6]{KobOta} which deals with both the ordinary and the non-ordinary cases simultaneously. Here we note that the Bloch-Kato Selmer groups in \textit{(loc.cit)} in in fact equal to our BDP Selmer group; thanks to \cite[p. 548, l. 1]{KobOta}, \cite[Remark 2.6]{KobOta} and \cite[p. 551, l. 14]{KobOta}. (See also the proof of Theorem 6.2 of \cite{castella-hsieh-heegner-cycles} which uses  \cite[Remark 2.6]{KobOta})

    The proof of the non-existence of finite submodule follows the same argument as laid by Greenberg in \cite[Prop. 4.14]{greenberg-lnm} (see also \cite[Lemma 3.4]{LMX} for the argument in the BDP-case which essentially follows Greenberg's original proof).
\end{proof}

\subsection{Putting it all together}\label{final}

By the main conjecture for BDP Selmer groups (Theorems \ref{thm:main-conj-ord} and \ref{thm:main-conj-ss}), we have
$$\mathrm{Char}_{\Lambda}(X^{\emptyset,0})\Lambda^{\mathrm{ur}}=(\mathcal{L}_E)$$
in $\Lambda^{\mathrm{ur}}$.
By the non-existence of non-trivial finite $\Lambda$-submodule of $\Sel^{\emptyset,0}(K_\infty,A)^\vee$ (Theorem \ref{thm:non-existence-finite}),
we have
$$\mathcal{L}_E = \mathrm{Fitt}_{\Lambda}(\Sel^{\emptyset,0}(K_\infty,A)^\vee).$$
Since Fitting ideals behave well under base change, we have
$$\mathcal{L}_E \pmod{\omega_n} = \mathrm{Fitt}_{\Lambda/\omega_n}(\Sel^{\emptyset,0}(K_\infty,A)[\omega_n]^\vee)$$
in $\Lambda_n = \Lambda/\omega_n$. 
By control theorem (Theorem \ref{thm:control-theorem}), we have
$$\mathcal{L}_E \pmod{\omega_n} = \mathrm{Fitt}_{\Lambda_n}(\Sel^{\emptyset,0}(K_n,A)^\vee).$$
Since we computed
$$\mathcal{L}_E \pmod{\omega_n} = \theta_n(f) \cdot \iota( \theta_n(f))$$
in $\S$\ref{sec:2.5BDP}, we have the conclusion
$$\theta_n(f) \cdot \iota( \theta_n(f)) = \mathrm{Fitt}_{\Lambda_n}(\Sel^{\emptyset,0}(K_n,A)^\vee).$$

\bibliographystyle{alpha}
\bibliography{library}
\end{document}